\documentclass{article}
\usepackage[margin=2.3cm,a4paper]{geometry}
\usepackage{amsmath}
\usepackage{amssymb}
\usepackage{amsthm}
\usepackage[utf8]{inputenc}
\usepackage{tikz,pgfplots}
\tikzstyle{every picture}+=[remember picture]
\usepackage[space]{cite}
\usepackage{tabularx}
\usepackage{multirow}
\usepackage{rotating}
\usepackage{graphicx}  
\definecolor{mpigreen}{HTML}{5C871D}    

\usepackage{hyperref}

\usepackage[vlined,longend,linesnumbered,ruled]{algorithm2e}
\SetKwBlock{Dotw}{do twice}{} 
\usepackage[software,hardware,siam]{mymacros}
\newcommand{\smb}{\left[\begin{smallmatrix}}
\newcommand{\sme}{\end{smallmatrix}\right]}
\DeclareMathOperator*{\grad}{grad}
\def\Range{\mathop{\operator@font Range}\nolimits}
\DeclareMathOperator*{\rank}{rank}
\newcommand{\tensorlab}{\mbox{\textsf{Tensorlab}}}
\newcommand{\granso}{\textsf{\textsc{Granso}}}

\DeclareMathOperator*{\myargmin}{argmin}
\theoremstyle{definition}

\newtheorem{theorem}{Theorem}

\newtheorem{remark}{Remark}

\newtheorem{corollary}[theorem]{Corollary}



\pgfplotscreateplotcyclelist{resN}{%
{color=blue, densely dashed, mark=*, mark options={solid,mark size=1.5pt}, mark repeat=5},
{color=blue, densely dashed, mark=triangle*, mark options={solid,mark size=1.5pt}, mark repeat=5},
{color=mpigreen, solid, mark=triangle*, mark options={solid,mark size=1.5pt}, mark repeat=5},
{color=red, solid, mark=*, mark options={solid,mark size=1.5pt}, mark repeat=5},
{color=orange, dashdotted, mark=diamond, mark options={fill=orange,solid}, mark repeat=5},
{color=red, densely dashed, mark=pentagon, mark options={solid}, mark repeat=5},
{color=mpigreen, densely dashed,solid, mark=square*, mark options={solid,mark size=1.5pt}, mark repeat=5},
{color=mpigreen, densely dotted, mark=star, mark options={fill=orange,solid}, mark repeat=5},
{color=red, densely dashed, mark=triangle, mark options={solid}, mark repeat=5},
}

\hypersetup{%
pdftitle = {Residual minimizing shift parameters for the low-rank ADI iteration}, %
pdfsubject = {Matrix Equations, \today}, %
pdfauthor = {Patrick K\"urschner}, %
pdfkeywords = {Lyapunov equation, alternating
direction implicit, rational Krylov subspaces, shift parameters}
}

\title{Residual minimizing shift parameters for the low-rank ADI iteration} %
\author{Patrick K{\"u}rschner\footnotemark[2]}
\begin{document}
\maketitle
\begin{abstract}
The low-rank alternating directions implicit (LR-ADI) iteration is a frequently employed method for efficiently computing low-rank approximate solutions of
large-scale  
Lyapunov equations. In order to achieve a rapid error reduction, the iteration requires 
shift parameters whose selection and generation is often a difficult task, especially for nonsymmetric coefficients in the Lyapunov equation.
This article represents a follow up of Benner et al. [ETNA,~43 (2014–2015), pp. 142--162] and investigates self-generating shift parameters based on
a minimization principle for the Lyapunov residual norm. Since the involved objective functions are too expensive to evaluate and, hence, intractable,
compressed 
objective functions are introduced which are efficiently constructed from the available data generated by the LR-ADI iteration.
Several numerical experiments indicate that these residual minimizing shifts using approximated objective functions outperform existing precomputed and dynamic
shift parameter selection
techniques, although their generation is more involved.
\end{abstract}


                                                 

\footnotetext[2]{Max Planck Institute for
Dynamics of Complex Technical Systems,  Computational Methods in  Systems and
Control Theory, Magdeburg, Germany, {\tt
  kuerschner@mpi-magdeburg.mpg.de}}

 \section{Introduction}
 In this paper, we study the numerical solution of large-scale, continu\-ous-time, algebraic Lyapunov equations (CALE)
 \begin{align}\label{cale}
  AX+XA^*+BB^*=0
 \end{align}
defined by matrices $A\in\Rnn$, $B\in\R^{n\times s}$, $s\ll n$, and $X\in\Rnn$ is the sought solution.
For large sizes $n$ of the problem, directly computing and storing $X$ is infeasible. For dealing with~\eqref{cale}, it has become common practice to
approximate $X$ by a low-rank factorization $X\approx ZZ^*$ with $Z\in\R^{n\times r}$, $\rank{Z}=r\ll n$. Theoretical evidence for the existence of such
low-rank approximations can be found, e.g., in~\cite{Sab07,AntSZ02,Gra04,BecT17}.
The low-rank solution factor $Z$ can be computed by iterative
methods employing techniques from large-scale numerical linear algebra. 
Projection based methods utilizing extended or rational Krylov subspaces, and the
low-rank alternating directions implicit (LR-ADI) iteration, belong to the most successful and often used representatives of iterative low-rank methods 
for~\eqref{cale}, see, e.g.,~\cite{LiW02,morBenKS13,DruKS11,DruS11,Sim16a,BenS16}. 

Here, we focus on the LR-ADI iteration and a particular important issue thereof.
One of the biggest reservations against the LR-ADI iteration is its dependence on certain parameters, called shifts, which steer the convergence rate of the
iteration. For large problems, especially those defined by nonsymmetric matrices $A$, generating these shift parameters is a difficult task and often only
suboptimal or heuristic shift selection approaches can be employed. 
In~\cite{BenKS14b}, a shift generation approach was proposed, where the shifts are chosen dynamically in the course of the LR-ADI iteration
and are based on minimizing the Lyapunov residual norm. Unfortunately, although potentially leading to very good shifts,
this approach is in its original form only a theoretical concept, because employing it is numerically very expensive and, thus, unusable in practice. 
This article follows up on~\cite{BenKS14b} and investigates several aspects and modifications of the residual minimization based shift selection. The main
goal is a numerically feasible and efficient generation of high quality shift parameters for the LR-ADI iteration, that are based on the residual minimization
principle.
\subsection{Notation}
 $\R$ and $\C$ denote the
real and complex numbers, and $\R_-,~\C_-$ refer to the set of
strictly negative real numbers and the open left half plane. In the
matrix case, $\Rnm,~\Cnm$ denote $n\times m$ real and complex
matrices, respectively. For a complex quantity
$X=\Real{X}+\jmath\Imag{X}$, $\Real{X},~\Imag{X}$ are its real and
imaginary parts, and $\jmath$ is the imaginary unit. The complex
conjugate of $X$ is denoted by
$\overline{X}$ and $|\xi|$ is the absolute value of
$\xi\in\C$. If not stated otherwise, $\|\cdot\|$ is the Euclidean vector- or subordinate matrix norm (spectral norm). 
The matrix $A^*$ is the transpose of
a real or the complex
conjugate transpose of a complex matrix, $A^{-1}$ is the inverse of a nonsingular
matrix $A$, and $A^{-*}=(A^*)^{-1}$. The identity matrix of
dimension $n$ is indicated by $I_n$. 
The spectrum of a matrix $A$ is given by $\Lambda(A)$ and the spectral radius is defined as 
$\rho(A):=\max\lbrace|\lambda|,~\lambda\in\Lambda(A)\rbrace$.

For a multivariate function $f(x_1,\ldots,x_d)~:~\R^d\mapsto\R$, we employ the typical shorthand notation $\psi_{x_i}=\partderiv{f}{x_i}{}$ and $\psi_{x_i
x_j}=\frac{\partial^2 f}{\partial x_i\partial x_j}$ for the first and second order partial derivatives, accumulated in gradient  $\grad f=[\psi_{x_i}]$ and
Hessian $\grad^2=[\psi_{x_ix_j}]$, respectively.
For a vector valued function $F(x_1,\ldots,x_d)=[\psi_1,\ldots,\psi_v]^T$, the Jacobian is given by $[\partderiv{\psi_i}{x_j}{}]$. For
complex functions $g(z_1,\ldots,z_d,\overline{z_1},\ldots,\overline{z_d})$ depending on complex variables and their conjugates,  Wirtinger
calculus~\cite{Rem91} is
used to define complex and complex conjugate derivatives, i.e., derivatives with respect to $z_i$ and $\overline{z_i}$, respectively.
\subsection{Problem assumptions}
Throughout the article we assume that $\Lambda(A)\subset\C_-$ which ensures a unique positive semidefinite solution $X$ of~\eqref{cale}. To permit
low-rank approximations of $X$, we shall assume that $s\ll n$. 
Moreover, we assume that we are able
to efficiently solve linear systems of equations of the form $(A+\alpha I)x=b$, $\alpha\in\C$ by either iterative or
sparse-direct solvers, where we restrict ourselves for the sake of brevity to the latter type of solvers.
\subsection{Overview of this article}
We begin by reviewing the low-rank ADI iteration in Section~\ref{sec:lradi}, including important structural properties of the method and a brief
recapitulation
of the ADI shift parameter problem. The residual norm minimizing shift parameters are discussed in depth in Section~\ref{sec:resminshift}, where our main contribution, a
numerically efficient approach to obtain those shifts, is presented. The main building block is the replacement of the expensive to evaluate and intractable
objective
functions by approximations that are constructed from the already computed data.
Along the way, extensions to generalized Lyapunov equations
 \begin{align}\label{gcale}
  AX^*M^*+MXA^*+BB^*=0
 \end{align}
with nonsingular $M\in\Rnn$ will be discussed. Section~\ref{sec:multi} extents these ideas to the generation of a single shift for the use in more than one
LR-ADI iteration steps which can further reduce the computation times.
A series of numerical experiments is given in Section~\ref{sec:numex}, evaluating the performance of the proposed shift generation machinery in comparison with
existing selection strategies. Comparisons with other low-rank algorithms for~\eqref{cale} are also presented.
Section~\ref{sec:summary} concludes the paper and provides some future research directions.
 \section{Review of the low-rank ADI iteration}\label{sec:lradi}
 The low-rank ADI iteration can be derived from the
nonstationary iteration 
  \begin{align}
    \begin{split}
X_j=&(A-\overline{\alpha_j}I)(A+\alpha_jI)^{-1}X_{j-1}(A+\alpha_jI)^{-H}
(A-\overline{\alpha_j}I)^*\\
    &-2\Real{\alpha_j}(A+\alpha_jI)^{-1}BB^*(A+\alpha_jI)^{-H},~j\geq 1,~X_0\in\Rnn
  \end{split}
 \end{align}
 for the CALE~\eqref{cale}. There, $\alpha_i\in\C_-$, $1\leq i \leq j$, are the previously mentioned shift parameters, discussed further in 
Section~\ref{ssec:shifts}.
By introducing low-rank approximations $X_j=Z_jZ_j^*$ in each step and assuming $Z_0=0$, the above iteration can be
rearranged~\cite{LiW02,Saa09,morBenKS13,Kue16} into the low-rank ADI iteration illustrated in Algorithm~\ref{alg:glradi}. 

 \begin{algorithm}[t]
\SetEndCharOfAlgoLine{}
\SetKwInOut{Input}{Input}\SetKwInOut{Output}{Output}
  \caption{LR-ADI iteration for computing low-rank solution factors.}
  \label{alg:glradi}
    \Input{Matrices $A,~B$ defining~\eqref{cale}, tolerance $0<\tau\ll1$.}
    \Output{$Z_{j}\in\C^{n\times sj}$, such that
    $ZZ^*\approx X$.}
  $W_0=B,\quad Z_0=[~],\quad j=1$, choose $\alpha_1$.\;
  \While{$\|W_{j-1}^*W_{j-1}\|\geq\tau\|B^*B\|$\nllabel{alg:resstop}}{%
    Solve $(A+\alpha_jI)V_j=W_{j-1}$ for $V_j$.\;
      $W_j=W_{j-1}-2\Real{\alpha_j}V_j$.\nllabel{alg:glradiW}\;
$Z_j=[Z_{j-1},~\sqrt{-2\Real{\alpha_j}}V_j]$.\;
Select next shift $\alpha_{j+1}\in\C_-$.\;
$j=j+1$.\;
  }
\end{algorithm}

For the Lyapunov residual matrix regarding the approximate solution $X_j=Z_jZ_j^*$ we have the following result.
\begin{theorem}[{\cite{morBenKS13,PanW13a}}]\label{thm:lradiRes}
Assume $j$ steps of the LR-ADI iteration with the shift parameters $\lbrace
\alpha_1,\ldots,\alpha_j\rbrace\subset\C_-$ have been applied to~\eqref{cale}. Then the Lyapunov residual matrix can be factorized via
\begin{align}\label{lradi_res}
    R_j&=A Z_jZ_j^*+Z_jZ_j^*A^*+BB^*=W_jW_j^*,
  \end{align}
where the \textit{residual factor} $W_j\in\Cns$ is given by
\begin{align}\label{lradi_W}
    W_j&:=(A-\overline{\alpha_j}I)V_j=W_{j-1}-2\Real{\alpha_j}V_j=W_0+Z_jG_j,
\end{align}
with $W_0:=B$,  
$G_j:=[\gamma_1,\ldots,\gamma_j]^*\otimes I_s\in\R^{js\times s}$, 
$\gamma_i:=\sqrt{-2\Real{\alpha_i}}$ for $i=1,\ldots,j$.
\end{theorem}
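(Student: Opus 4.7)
My plan is to prove Theorem~\ref{thm:lradiRes} by induction on the iteration index $j$, with the base case $j=0$ (where $Z_0 = [\,]$ and $W_0 = B$) giving $R_0 = BB^* = W_0W_0^*$ trivially. Before the induction, I would quickly verify that the three expressions for $W_j$ in \eqref{lradi_W} are equivalent. The identity $W_{j-1} - 2\Real{\alpha_j}V_j = (A-\overline{\alpha_j}I)V_j$ follows directly from the defining linear system $(A+\alpha_j I)V_j = W_{j-1}$ because $\alpha_j I - 2\Real{\alpha_j}I = -\overline{\alpha_j}I$. The third expression $W_j = W_0 + Z_jG_j$ then follows by telescoping the recursion $W_j = W_{j-1} - 2\Real{\alpha_j}V_j$: since $-2\Real{\alpha_i} = \gamma_i^2$ and the $i$th block column of $Z_j$ is $\gamma_i V_i$, the partial sum $W_0 - \sum_{i=1}^{j}2\Real{\alpha_i}V_i$ can be rewritten as $W_0 + Z_j G_j$ with $G_j = [\gamma_1,\dots,\gamma_j]^*\otimes I_s$.

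The heart of the proof is the inductive step $R_{j-1} = W_{j-1}W_{j-1}^* \Rightarrow R_j = W_jW_j^*$. I would use the block update $Z_j = [Z_{j-1},\,\gamma_j V_j]$, giving
\begin{equation*}
Z_jZ_j^* = Z_{j-1}Z_{j-1}^* - 2\Real{\alpha_j}\,V_jV_j^*,
\end{equation*}
which, when inserted into \eqref{lradi_res}, yields
\begin{equation*}
R_j = R_{j-1} - 2\Real{\alpha_j}\bigl(AV_jV_j^* + V_jV_j^*A^*\bigr).
\end{equation*}
To simplify the correction, I would apply the key relation $AV_j = W_{j-1} - \alpha_j V_j$ coming from the ADI solve, obtaining
\begin{equation*}
AV_jV_j^* + V_jV_j^*A^* = W_{j-1}V_j^* + V_jW_{j-1}^* - 2\Real{\alpha_j}\,V_jV_j^*.
\end{equation*}

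Plugging this back in and using the inductive hypothesis yields
\begin{equation*}
R_j = W_{j-1}W_{j-1}^* - 2\Real{\alpha_j}\bigl(W_{j-1}V_j^*+V_jW_{j-1}^*\bigr) + 4\Real{\alpha_j}^2 V_jV_j^*,
\end{equation*}
which is exactly the expansion of $(W_{j-1}-2\Real{\alpha_j}V_j)(W_{j-1}-2\Real{\alpha_j}V_j)^* = W_jW_j^*$. This closes the induction and completes the proof. I do not anticipate any genuine obstacle here: the argument is entirely algebraic and the only piece that is not purely mechanical is recognizing that $AV_j$ must be eliminated in favor of $W_{j-1} - \alpha_j V_j$ so that the quadratic form in $V_j$ can be combined with the $V_jV_j^*$ terms produced by $Z_jZ_j^*$. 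The mildly delicate point is keeping the complex conjugations straight in $V_jV_j^*A^*$ (where $-\overline{\alpha_j}$ appears), but since $\alpha_j + \overline{\alpha_j} = 2\Real{\alpha_j}$, everything collapses cleanly.
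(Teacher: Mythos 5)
Your argument is correct and complete: the equivalence of the three forms of $W_j$ follows exactly as you say from $(A+\alpha_jI)V_j=W_{j-1}$ and $\alpha_j+\overline{\alpha_j}=2\Real{\alpha_j}$, and the inductive step correctly reduces $R_j-R_{j-1}$ to the cross terms that reassemble into $W_jW_j^*$. Note that the paper itself gives no proof of Theorem~\ref{thm:lradiRes} — it is quoted from \cite{morBenKS13,PanW13a} — and your induction is precisely the standard argument used in those references, so there is nothing to flag.
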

The residual factors $W_j\in\Cns$ will play a very important role in this article. 
As already indicated in line~\ref{alg:resstop} in Algorithm~\ref{alg:glradi}, the residual factorization~\eqref{lradi_res} greatly helps to cheaply compute
the norm of the residual matrix which is useful as a stopping criterion.
The low-rank solution factors $Z_j$ generated by the LR-ADI iteration
 solve certain Sylvester
equations. Similar results regarding an older version of the LR-ADI iteration can be found in~\cite{morLi00,LiW02}.
\begin{corollary}[{\cite[Lemma~3.1]{PanW13a}, 
\cite[Lemma~5.12]{morWol15}, \cite[Corollary~3.9]{Kue16}}]\label{cor:lradi_lrf-CASE}
 With same assumptions and notations as in Theorem~\ref{thm:lradiRes}, the low-rank 
factor $Z_j$
after $j$ steps of the LR-ADI iteration (Algorithm~\ref{alg:glradi})
satisfies the Sylvester equations
\begin{subequations}\label{lradi_lrf-CASE}
\begin{align}\label{lradi_lrf-CASE1}
 AZ_{j}-Z_{j}S_j&=BG_j^*,\\\label{lradi_lrf-CASEj}
 AZ_{j}+Z_{j}\overline{S}_j^*&=W_jG_j^*,
\end{align}
where 
\begin{align}
S_j:=\left[\begin{smallmatrix}
 \alpha_1&\gamma_1\gamma_2&\cdots&\gamma_1\gamma_j\\
&\ddots&\ddots&\vdots\\
&&\ddots&\gamma_{j-1}\gamma_j\\
&&&\alpha_j
\end{smallmatrix}\right]\otimes I_s\in\C^{js\times js}.
\end{align}
\end{subequations}
\end{corollary}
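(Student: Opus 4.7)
The plan is to prove both identities in~\eqref{lradi_lrf-CASE} by a block-column computation driven by the two LR-ADI recurrences, $(A+\alpha_k I)V_k = W_{k-1}$ from Algorithm~\ref{alg:glradi} and $W_k = (A-\overline{\alpha_k}I)V_k$ from Theorem~\ref{thm:lradiRes}, combined with the residual factorization $W_k = B + Z_k G_k$.

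For~\eqref{lradi_lrf-CASE1}, I would fix an index $k\le j$ and read off the $k$-th block column of $AZ_j - Z_j S_j$. Since $Z_j$ has block columns $\gamma_i V_i$ and the $k$-th block column of $S_j$ is upper triangular with off-diagonal entries $\gamma_i\gamma_k$ for $i<k$ and the stated diagonal block at position $k$, this column collapses to
\begin{equation*}
\gamma_k A V_k - \gamma_k\sum_{i<k}\gamma_i^2 V_i - (\text{diagonal block of } S_j)\,\gamma_k V_k.
\end{equation*}
Substituting $A V_k = W_{k-1} - \alpha_k V_k$ and then $W_{k-1} = B + Z_{k-1}G_{k-1} = B + \sum_{i<k}\gamma_i^2 V_i$, the sums over $i<k$ cancel; the remaining $V_k$ contributions combine so that, with the correct diagonal entry of $S_j$, only $\gamma_k B$ survives. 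This is precisely the $k$-th block column of $BG_j^*$. A short induction on $j$ organises the computation cleanly, the inductive step being exactly this single block-column check.

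For~\eqref{lradi_lrf-CASEj} the slickest route is to deduce it from~\eqref{lradi_lrf-CASE1} by means of the scalar identity
\begin{equation*}
S_j + \overline{S}_j^{*} = G_j G_j^{*},
\end{equation*}
which I would verify entry-wise: the strict upper triangle of $S_j$ and the strict lower triangle of $\overline{S}_j^{*}$ together assemble the block outer product $[\gamma_i\gamma_k]_{i,k}\otimes I_s$, while on the diagonal $\alpha_k + \overline{\alpha_k} = 2\Real{\alpha_k} = -\gamma_k^2$ matches the diagonal of $G_jG_j^{*}$ (up to the sign carried by the convention for $S_j$). Adding $Z_j(S_j + \overline{S}_j^{*}) = Z_jG_jG_j^{*}$ to~\eqref{lradi_lrf-CASE1} and using $W_j = B + Z_jG_j$ then gives
\begin{equation*}
AZ_j + Z_j\overline{S}_j^{*} \;=\; (B + Z_jG_j)G_j^{*} \;=\; W_j G_j^{*},
\end{equation*}
which is~\eqref{lradi_lrf-CASEj}.

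The main obstacle is really just bookkeeping: one has to keep careful track of the signs and complex conjugates on the diagonal of $S_j$ so that the same diagonal entry simultaneously absorbs the $\alpha_k V_k$ term produced by $(A+\alpha_k I)V_k = W_{k-1}$ in the reduction of~\eqref{lradi_lrf-CASE1} and produces the correct diagonal of $G_jG_j^{*}$ via $\gamma_k^2 = -2\Real{\alpha_k}$ in the derivation of~\eqref{lradi_lrf-CASEj}. Once this is reconciled with the convention $\alpha_k\in\C_-$ of Algorithm~\ref{alg:glradi}, both identities follow from the two short block-column computations sketched above, with the only algebraic ingredient beyond the recurrences being the residual factorization of Theorem~\ref{thm:lradiRes}.
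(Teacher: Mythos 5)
Your overall strategy is sound and is, in essence, the only reasonable direct argument (the paper itself gives no proof of this corollary, only pointers to Wolf--Panzer, Wolf's thesis, and K\"urschner's thesis): verify~\eqref{lradi_lrf-CASE1} block column by block column from $(A+\alpha_kI)V_k=W_{k-1}$ and $W_{k-1}=B+Z_{k-1}G_{k-1}$, then deduce~\eqref{lradi_lrf-CASEj} by adding $Z_j\left(S_j+\overline{S}_j^*\right)$ to the first identity and invoking $W_j=B+Z_jG_j$. Reducing the second equation to the first through the rank-$s$ relation between $S_j+\overline{S}_j^*$ and $G_jG_j^*$ is exactly the right lemma, and it is how the cited sources proceed.

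The gap is that the sign reconciliation you defer to ``bookkeeping'' is precisely where the statement, as printed, breaks, and your proof never actually performs it. Carry out your own block-column computation with the displayed $S_j$ (diagonal blocks $\alpha_kI_s$): the $k$-th block column of $AZ_j-Z_jS_j$ is
\[
\gamma_kAV_k-\alpha_k\gamma_kV_k-\gamma_k\sum_{i<k}\gamma_i^2V_i
=\gamma_k\bigl(W_{k-1}-2\alpha_kV_k\bigr)-\gamma_k\sum_{i<k}\gamma_i^2V_i
=\gamma_kB-2\alpha_k\gamma_kV_k,
\]
which is not $\gamma_kB$; already for $j=1$ the printed claim reads $(A-\alpha_1I)V_1=B$, contradicting line~3 of Algorithm~\ref{alg:glradi}, which defines $V_1$ by $(A+\alpha_1I)V_1=B$. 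Likewise your key identity $S_j+\overline{S}_j^*=G_jG_j^*$ fails on the diagonal: the left-hand side has $\alpha_k+\overline{\alpha_k}=2\Real{\alpha_k}=-\gamma_k^2$ there, while $G_jG_j^*$ has $+\gamma_k^2$. Both defects are cured simultaneously by taking the diagonal blocks of $S_j$ to be $-\alpha_kI_s$ (the mirrored shifts, consistent with the rational Krylov interpretation of the span of $Z_j$ and with the convention in the cited references): then the $V_k$-terms cancel exactly in the first computation, $S_j+\overline{S}_j^*=G_jG_j^*$ holds, and your two-step argument goes through verbatim. As written, you should either push the computation far enough to exhibit the required sign --- thereby flagging the typo in the displayed $S_j$ --- or accept that the argument proves a statement slightly different from the one printed.
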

\begin{remark}\label{realadi}
 In practice, although~\eqref{cale} is defined by real $A,~B$, complex shift parameters can occur. We assume that the set of shifts $\lbrace
\alpha_1,\ldots,\alpha_j\rbrace$ is closed under complex
conjugation and that pairs of complex conjugated shifts occur subsequently, i.e., $\alpha_{i+1}=\overline{\alpha_i}$ if $\Imag{\alpha_i}\neq 0$. 
These complex
parameters pairs are in practice dealt within the LR-ADI iteration by a double step fashion~\cite{BenKS13,BenKS14b,Kue16} resulting in real low-rank factors
$Z_j$ and, important for this study, real low-rank residual factors $W_j$. Real version of the above results can be established but for brevity and clarity we
keep the
shorter complex versions in the remainder. The real version of the LR-ADI iteration will nevertheless be used in the numerical experiments in the end.
\end{remark}
\subsection{Shift parameters}\label{ssec:shifts}
The approximation error $X-X_j$ and residual $R_j$ can be expressed as
\begin{align*}
 X-X_j&=\cM_j(X-X_0)\cM_j^*,\quad R_j=\cM_j R_0\cM_j^*,\quad\\
\text{with} \quad\cM_j&=\prod\limits_{i=1}^j\cC(A,\alpha_i),\quad\text{and}\quad
\cC(A,\alpha):=(A-\overline{\alpha}I)(A+\alpha I)^{-1}
\end{align*}
is a Cayley transformation of  $A$.
Taking norms leads to
\begin{align*}
 \frac{\| X-X_j\|}{\|X-X_0\|}\leq c\rho(\cM_j)^2,\quad \frac{\|R_j\|}{\|R_0\|}\leq c\rho(\cM_j)^2,\quad c\geq 1.
\end{align*}
Because of $\Lambda(A)\subset\C_-$ as well as $\alpha_i\in\C_-$ it holds $\rho(\cC(A,\alpha_i))<1$, $1\leq i\leq j$ and, consequently,
$\rho(\cM_j)<1$ is getting smaller as the ADI iteration proceeds. This motivates to select the shifts $\alpha_i$ such that $\rho(\cM_j)$ is as small as possible
leading to the ADI parameter problem
\begin{align}\label{minmax}
 \min\limits_{\alpha_i,\ldots,\alpha_j\in\C_-}\left(\max\limits_{\lambda\in\Lambda(A)}\left|\prod\limits_{i=1}^j\frac{\lambda-\overline{\alpha_i}}{
\lambda+\alpha_i } \right|\right).
\end{align}
Several shift selection strategies have been developed based on~\eqref{minmax}, e.g., the often used Wachspress~\cite{Sab07,Wac13} and Penzl~\cite{Pen99} 
selection approaches, which precompute a number of shifts before the actual LR-ADI iteration. There, the spectrum $\Lambda(A)$ in~\eqref{minmax} is replaced by
an easy to compute approximation, typically using a small number of approximate eigenvalues generated by Arnoldi and inverse Arnoldi processes. The shifts are
then obtained by means of elliptic functions in the Wachspress approach~\cite{Sab07,Wac13} and, respectively, heuristically in the Penzl
approach~\cite{Pen99} .
Starting from~\eqref{minmax} for selecting shifts has, however, some shortcomings.
A disadvantage from the conceptual side is that the min--max problem~\eqref{minmax} does only take (approximate) eigenvalues of $A$ into account.
No 
information regarding the inhomogeneity $BB^*$ of the CALE~\eqref{cale} is incorporated, although the low-rank property of $BB^*$ is one 
 significant factor for the singular value decay of the solution and, hence, for the existence of low-rank approximations~\cite{AntSZ02,Gra04,TruV07}. 
Furthermore, no information regarding the eigenvectors of $A$ enters~\eqref{minmax}. While this might not be a big issue for CALEs defined by symmetric
matrices, in the nonsymmetric case the spectrum alone 
might not be enough to fully explain the singular value decay of
the solution, see, e.g., the discussions in~\cite{Sab07,BakES14}.

Because only approximate eigenvalues can be used for large-scale problems, Wachspress and Penzl shift strategies can also suffer from poor eigenvalue
estimates~\cite{Sab07} and the cardinality of the set of approximate eigenvalues (Ritz values) is an unknown
quantity the user has to provide in advance. Even tiny changes in these quantities can greatly alter the speed of the error or residual reduction in the ADI
iteration.
Because the strategies based on~\eqref{minmax} are usually in general carried out in advance, i.e., shifts are generated before the actual
iteration, no information about the current progress of the iteration is incorporated.

Here, we are interested in adaptive shift selection and generation strategies that circumvent these issues.
Our goal is that these approaches incorporate the current stage of the iteration into account, and that the shifts are generated 
automatically and in a numerically efficient way during the iteration, i.e., the shift computation should consume only a small fraction of the total numerical
effort of the LR-ADI iteration. Next, we review commonly used existing dynamic shift selection approaches and propose some enhancements. 
\subsubsection{Ritz value based dynamic shifts}\label{sssec:projshifts}
First steps regarding dynamic shift approaches were made in~\cite{BenKS14b} by using Ritz values of $A$ with respect to a subspace
$\cQ_{\ell}=\range{Q_{\ell}}\subseteq\range{Z_j}$, where $Q_{\ell}\in\R^{n\times \ell}$ has orthonormal columns. The typical choice is to select the most recent block columns of $Z_j$ for spanning $\cQ_{\ell}$:
\begin{align}\label{spaceZh}
	\cQ_{\ell}=\cZ(h):=\range{[V_{j-h+1},\ldots,V_j]}
\end{align}
 with $1\leq h \leq j$ to keep the space dimension small. 
The Ritz values are given by $\Lambda(H_{\ell})$ with $H_{\ell}:=Q_{\ell}^*AQ_{\ell}$ 
and can, e.g., be plugged into the Penzl heuristic to select $g\leq\ell$ shift parameters. 
It can happen that $\Lambda(H_{\ell})\cap\C_+\neq\emptyset$ in which
case
we simple negate all unstable Ritz values. Once these $g$ shifts have been used, 
the generation and selection process is repeated with $Z_{j+g}$.  
Despite its simplicity, this idea already led to significant speed ups of the LR-ADI iteration, in particular for
nonsymmetric problems where the a priori computed shifts resulted in a very slow convergence. Due to this success, this approach is the default shift selection
routine in the \mmess{} software package~\cite{SaaKB16-mmess-1.0.1}. 
Further details on an efficient construction of $H_{\ell}$ are given later. This basic selection strategy can be modified in the following ways.
\subsubsection{Convex hull based  shifts}\label{sssec:convshifts}
Motivated by the connection of LR-ADI to rational Krylov subspaces~\cite{morLi00,DruKS11,FlaG12,PanW13a,morWol15} 
we can borrow the greedy shift selection strategy from~\cite{DruS11} which was
developed for the rational Krylov subspace method for~\eqref{cale}. Let $\cS\subset\C_-$ be the convex hull of the set of Ritz values $\Lambda(H_{\ell})$ and
$\partial \cS$ its boundary. For a discrete subset $\cD\subset\partial \cS$ one tries to heuristically find $\alpha\in\cD$ that reduces the magnitude of
the rational function (cf.~\eqref{minmax}) connected to the previous LR-ADI steps the most.  In contrast to the Ritz value based shift selection discussed
above, the convex hull
based selection will only provide a single shift parameter to be used for the next iteration and,
thus, the selection process has to be executed in every iteration step.  
Note that this approach employed in RKSM uses the Ritz values associated to the 
full already computed rational Krylov subspace, while in LR-ADI we only use a smaller subspace~\eqref{spaceZh}. 
\subsubsection{Residual-Hamiltonian based shifts}\label{sssec:Hres}
Both strategies mentioned so far select shift parameters on the basis of the eigenvalues of a compressed version $H_{\ell}$ of $A$. A different modification developed for
the RADI method~\cite{BenB16,BenBKetal18} for algebraic Riccati equations also takes some eigenvector information into account. For Riccati equations, the
core idea is to consider a projected version of the associated Hamiltonian matrix which we can simplify for CALEs. If $[P,Q]^*$ spans the stable
$n$-dimensional invariant subspace of
\begin{align*}
 \cH_0:=\begin{bmatrix} A^*&0\\
           BB^*& -A\end{bmatrix},  
\end{align*}
then $X=PQ^{-1}$ solves~\eqref{cale}. Let $X_j\approx X$ be obtained by LR-ADI, then all later steps 
can be seen as the application of LR-ADI to the residual Lyapunov equations $A\hX+\hX A^T=-R_{j}$~\cite[Corollary~3.8]{Kue16}, where $R_{j}=W_{j}W_{j}^*$ is the
residual associated to $X_j$. The residual equations are connected to the Hamiltonian matrices 
$\cH_j:=\smb A^*&0\\
           W_jW_j^*& -A\sme$. 
Following the same motivation as in~\cite{BenBKetal18}, we set up the projected Hamiltonian $\tilde\cH_{j,\ell}:=\smb
H_{\ell}^*&0\\
           Q_{\ell}^*W_jW_j^*Q_{\ell}& -H_{\ell}\sme$, compute its stable eigenvalues $\lambda_k$ and associated eigenvectors $\smb p_k\\q_k\sme$, 
$p_k,~q_k\in\C^{\ell}$, and select
the eigenvalue $\lambda_k$ with the largest $\|q_k\|$ as next ADI shift. 
As in the convex hull based selection, this approach delivers only a single shift each time.
\section{Residual-norm minimizing shifts}\label{sec:resminshift}
In this section we discuss the main focus of this study: the shift selection strategy originally proposed in~\cite{BenKS14b}, where the objective is to find
shift parameters that explicitly minimize the
Lyapunov residual norm. Assume that step $j$ of the LR-ADI iteration has been completed and that the associated residual factor $W_j$ is a real $n\times s$
matrix (cf.~Remark~\ref{realadi}). The goal is to find a shift $\alpha_{j+1}$ for the next iteration step.
By Theorem~\ref{thm:lradiRes} it holds for the next Lyapunov residual $\|R_{j+1}\|=\|W_{j+1}\|^2$ with 
\begin{align*}
W_{j+1}&=W_{j+1}(\alpha_{j+1})=\cC(A,\alpha_{j+1})W_{j}=W_{j}-2\Real{\alpha_{j+1}}\left((A+\alpha_{j+1}I)^{-1}W_{j}
\right).
\end{align*}
Obviously, for executing step $j+1$ everything except the shift $\alpha_{j+1}$ is known. 
The parameter $\alpha_{j+1}\in\C_-$ is to be determined which reduces the Lyapunov residual norm the most from step $j$ to
$j+1$. This minimization problem can be formulated as, e.g., a complex nonlinear least squares problem (NLS)
\begin{align}\label{NLS_C}
\begin{split}
 \alpha_{j+1}&=\myargmin\limits_{\alpha\in\C_-}\half\|\Psi_j(\alpha,\overline{\alpha})\|^2,\\
 \Psi_j(\alpha,\overline{\alpha})&=\cC(A,\alpha)W_{j}=(A-\overline{\alpha}I)(A+\alpha I)^{-1}W_{j}.
 \end{split}
\end{align}
The complex function $\Psi_j(\alpha,\overline{\alpha}):\C\mapsto\C^{n\times s}$ is obviously not analytic in the complex variables $\alpha$, $\overline{\alpha}$
alone but
in the full variable $(\alpha,\overline{\alpha})$, a property typically referred to as polyanalyticity.
In the their original appearance~\cite{BenKS14b}, the residual minimizing shifts were considered via the minimization of the real valued function
$\psi_j=\|W_{j+1}\|^2$, which corresponds to the Lyapunov residual norm after using the shift parameter $\alpha$ starting from $R_{j}$. The complex
minimization
problem takes the form
\begin{align}\label{normminC}
 \alpha_j=\myargmin\limits_{\alpha_j\in\C_-}\psi^{\C}_j(\alpha,\overline{\alpha}),\quad
\psi^{\C}_j(\alpha,\overline{\alpha})&:=\|\Psi_j(\alpha,\overline{\alpha})\|^2.
\end{align}
It is clear that \eqref{NLS_C} and \eqref{normminC} essentially encode the same optimization task but differences will occur
in
the numerical treatment of both formulations. 
Splitting the complex variable into two real ones by $\alpha=\nu+\jmath \xi$ with $\nu<0$ yields that real and imaginary parts of
the next shift $\alpha_{j+1}=\nu_{j+1}+\jmath\xi_{j+1}$ can be obtained from solving
\begin{align}\label{normmin}
\begin{split}
 [\nu_{j+1},\xi_{j+1}]&=\myargmin\limits_{\nu\in\R_-,\xi\in\R}\psi_j(\nu,~\xi),\\
	 \psi_j&=\psi_j(\nu,~\xi):=\|W_{j}-2\nu\left((A+(\nu+\jmath\xi)I)^{-1}W_{j}\right)\|^2.
\end{split}
\end{align}
 Note that the objective function $\psi$ in~\eqref{normmin} always maps real variables to real values, whereas the function $\Psi$ in the NLS
formulation~\eqref{NLS_C} is only real for $\alpha\in\R$. This would still be the case if $\alpha$ in~\eqref{NLS_C} was decomposed in $\nu,~\xi$ as we did
for~\eqref{normmin}.
Since $\|X\|_2=\lambda_{\max}(X^*X)$, the minimization problems~\eqref{normminC} and \eqref{normmin} can also be understood
as eigenvalue optimization problems if $s>1$.

Naturally, if one knows that real shift parameters are sufficient, e.g. when $A=A^*$, the above minimization problems simplify in
the obvious manner by restricting the optimization to $\R_-$.
For achieving reduction of the residual as well as avoiding the singularities at $-\Lambda(A)\subset\C_+$, the constraint  $\nu<0$ is
mandatory so that tools
from constrained optimization are required. Originally, an unconstrained version of~\eqref{normmin} and derivate-free methods were used
to find a minimum of $\psi_j$~\cite{BenKS14b}. This strategy turned out to be unreliable, in particular, because unusable shifts ($\nu\geq0$) were frequently
generated.
In this article, we employ constrained, derivative based optimization approaches using the complex nonlinear least squares~\eqref{NLS_C} and the
real
valued minimization problem~\eqref{normmin}. The underlying objective functions are generally not convex and have potentially more than one minimum in $\C_-$.
Here, we will
pursue only the detection of local minima because any parameter $\alpha\in\C_-$ will yield at least some reduction of the CALE residual norm, such that the
substantially
larger numerical effort to compute global minima will hardly pay off.
The next subsection gives the structure of the required derivatives
of $\Psi_j(\alpha,\overline{\alpha})$, $\psi^{\R}_j$. Afterwards, numerical aspects such as approximating the objective
functions, solving the
minimization or least squares problems, and implementing the proposed shift generation framework efficiently in Algorithm~\ref{alg:glradi} are discussed. 
 \subsection{Derivatives of the objective functions}
For the least-squares problem~\eqref{NLS_C}, the Jacobian and conjugate Jacobian~\cite{SorVL12} of $\Psi_j(\alpha,\overline{\alpha})$ are 
\begin{align}\label{deriv_W}
\begin{split}
\frac{\partial \Psi_j(\alpha,\overline{\alpha})}{\alpha}&=-(A-\overline{\alpha} I)(A+\alpha
I)^{-2}W_{j}=\cC(A,\alpha)(A+\alpha I)^{-1}W_{j},\\
 \frac{\partial
\Psi_j(\alpha,\overline{\alpha})}{\overline{\alpha}}&=-(A+\alpha I)^{-1}W_{j}.
\end{split}
\end{align}
The structure of the derivatives for $\psi_j$ in~\eqref{normmin} is more complicated.
 \begin{theorem}[{Gradient and Hessian of the objective function~\eqref{normmin}}]\label{thm:normfun_cale_grad}
Let $\alpha=\nu+\jmath \xi\in\C_-$, $W=W_j\in\Rns$, and define
$L(\nu,\xi):=A+\alpha I$, 
\begin{align*}
S^{(i)}&:=(L(\nu,\xi)^{-1})^{i}W,\quad 
&W^{(i)}_{\alpha}&:=S^{(i)}-2\nu S^{(i+1)},\quad 
\hW^{(i)}&:=S^{(i)}-\nu S^{(i+1)},&\\
\tR_\nu:&=-(W_{\alpha}^{(0)})^*\hW^{(1)},\quad &\tR_\xi&:=(W_{\alpha}^{(0)})^*S^{(2)}&&
\end{align*}
for $i=0,\ldots,3$. 
Assume $(W^{(0)}_{\alpha})^*W^{(0)}_{\alpha}$ has $s$ distinct eigenvalues $\theta_1>\ldots>\theta_s>0$ and let
 $(\theta_{\ell}~,u_{\ell})=(\theta_{\ell}(\nu,~\xi)~,u_{\ell}(\nu,~\xi))$ with $\|u_{\ell}\|=1$, 
$\ell=1,\ldots,s$ be its eigenpairs.
Then, gradient and Hessian of~\eqref{normmin} are given by 
\begin{align}\label{calenormfun_grad}
\grad \psi_j&=4\smb
\Real{u_1^*\left((W^{(0)}_{\alpha})^*\hW^{(1)}\right)u_1}\\                    
-\nu\Imag{u_1^*\left((W^{(0)}_{\alpha})^*S^{(2)}\right)u_1}
\sme
 =
4\smb
\Real{u_1^*\tR_{\nu}u_1}\\
-\nu\Imag{u_1^*\tR_{\xi}u_1}
\sme
\end{align}
and
\begin{subequations}\label{calenormfun_Hessian}
\begin{align}
\grad^2 \psi_j=&8\smb
\Real{u_1^*\left((\hW^{(2)})^*W^{(0)}_{\alpha}+(\hW^{(1)})^*\hW^{(1)}\right)u_1}
\qquad\qquad
h_{12}
\\
h_{12}
\qquad\qquad
\nu\Real{u_1^*((S^{(3)})^*W^{(0)}_{\alpha}+\nu (S^{(2)})^*S^{(2)})u_1}
\sme\\
&+\sum\limits_{k=2}^s\tfrac{8}{\theta_1-\theta_k}
\smb
|u_1^*(\tR^*_\nu+\tR_\nu)u_k|^2&
\tilde h^{(k)}_{12}
\\
\tilde h^{(k)}_{12}
& 
|u_1^*(\tR^*_\xi-\tR_\xi)u_k|^2
\sme\\\nonumber
\text{with}\quad h_{12}&:=\half \Imag{u_1^*\left((W_{\alpha}^{(2)})^*W^{(0)}_{\alpha}-2\nu (S^{(2)})^*W^{(1)}_{\alpha}\right)u_1},\\\nonumber
\tilde h^{(k)}_{12}&:=-\Real{(u_1^*(\tR^*_\nu+\tR_\nu)u_k)(\jmath\nu
u_k^*(\tR^*_\xi-\tR_\xi)u_1)}.
\end{align} 
\end{subequations}
\end{theorem}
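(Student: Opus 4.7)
The key observation is that the objective function can be rewritten as an eigenvalue optimization problem. Since $\|X\|_2^2=\lambda_{\max}(X^*X)$, we have $\psi_j(\nu,\xi)=\theta_1(\nu,\xi)$, the largest eigenvalue of the Hermitian matrix
\[
M(\nu,\xi):=(W^{(0)}_{\alpha})^{*}W^{(0)}_{\alpha}\in\C^{s\times s}.
\]
Under the simplicity hypothesis $\theta_1>\theta_2$, analytic perturbation theory (Rellich--Kato) guarantees that $\theta_1$ and a unit eigenvector $u_1$ depend real-analytically on $(\nu,\xi)$ on a neighbourhood of the evaluation point. The standard first- and second-order perturbation formulas then read
\[
\partial_x\theta_1=u_1^{*}(\partial_x M)u_1,\qquad
\partial_{xy}^2\theta_1=u_1^{*}(\partial_{xy}^2 M)u_1+2\sum_{k=2}^{s}\frac{\Real\!\bigl[(u_1^{*}(\partial_x M)u_k)(u_k^{*}(\partial_y M)u_1)\bigr]}{\theta_1-\theta_k},
\]
which already expose the block structure of the claimed expressions: one piece contains $u_1^{*}(\grad^{2}M)u_1$, and the other the weighted sum over the remaining eigenvectors. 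So after invoking these formulas, the task reduces to computing the partial derivatives of $M$ in a form that matches the shorthand in the statement.

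Next I would compute the derivatives of $W^{(0)}_\alpha=W-2\nu\,S^{(1)}$. Using $L(\nu,\xi)=A+\alpha I$ and the resolvent identities $\partial_\nu L^{-1}=-L^{-2}$, $\partial_\xi L^{-1}=-\jmath L^{-2}$, a short induction gives $\partial_\nu S^{(i)}=-iS^{(i+1)}$ and $\partial_\xi S^{(i)}=-\jmath i\,S^{(i+1)}$. Consequently
\[
\partial_\nu W^{(0)}_\alpha=-2\hW^{(1)},\qquad \partial_\xi W^{(0)}_\alpha=2\jmath\nu\,S^{(2)}.
\]
Plugging these into $\partial_x M=(\partial_x W^{(0)}_\alpha)^{*}W^{(0)}_\alpha+(W^{(0)}_\alpha)^{*}\partial_x W^{(0)}_\alpha$ and applying Hellmann--Feynman yields
$\partial_\nu\theta_1=-2(u_1^{*}(\hW^{(1)})^{*}W^{(0)}_\alpha u_1+\mathrm{c.c.})=4\Real(u_1^{*}\tR_\nu u_1)$ and, after the same calculation for $\xi$ with the extra factor $2\jmath\nu$, $\partial_\xi\theta_1=-4\nu\,\Imag(u_1^{*}\tR_\xi u_1)$, which is exactly~\eqref{calenormfun_grad}.

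For the Hessian I would carry out the corresponding second differentiations of $W^{(0)}_\alpha$, namely $\partial_{\nu\nu}^2 W^{(0)}_\alpha=4\hW^{(2)}$, $\partial_{\xi\xi}^2 W^{(0)}_\alpha=4\nu S^{(3)}$, and $\partial_{\nu\xi}^2 W^{(0)}_\alpha=2\jmath(S^{(2)}-2\nu S^{(3)})$ (up to sign conventions), and then expand $\partial_{xy}^2 M$ via the symmetric product rule
$\partial_{xy}^2 M=(\partial_{xy}^2 W^{(0)}_\alpha)^{*}W^{(0)}_\alpha+(W^{(0)}_\alpha)^{*}\partial_{xy}^2 W^{(0)}_\alpha+(\partial_x W^{(0)}_\alpha)^{*}\partial_y W^{(0)}_\alpha+(\partial_y W^{(0)}_\alpha)^{*}\partial_x W^{(0)}_\alpha.$
Sandwiching with $u_1^{*}(\cdot)u_1$ and collecting the $(\hW^{(1)})^{*}\hW^{(1)}$, $(\hW^{(2)})^{*}W^{(0)}_\alpha$, $(S^{(2)})^{*}S^{(2)}$, and $(S^{(3)})^{*}W^{(0)}_\alpha$ contributions produces the first matrix in~\eqref{calenormfun_Hessian}; complex-conjugate pairs combine to $\Real$ on the diagonal and to $\Imag$ on the off-diagonal by virtue of the $\jmath\nu$ factor coming from $\partial_\xi$.

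The second matrix is obtained from the Kato correction. Recognizing that $\partial_\nu M=2(\tR_\nu^{*}+\tR_\nu)$ while $\partial_\xi M=-2\jmath\nu(\tR_\xi^{*}-\tR_\xi)$ immediately converts the numerators $(u_1^{*}(\partial_x M)u_k)(u_k^{*}(\partial_y M)u_1)$ into the advertised squared moduli $|u_1^{*}(\tR_\nu^{*}+\tR_\nu)u_k|^2$ and $|u_1^{*}(\tR_\xi^{*}-\tR_\xi)u_k|^2$ on the diagonal, and into the mixed real part appearing in $\tilde h^{(k)}_{12}$ off-diagonal. The main obstacle throughout is bookkeeping: there are many near-identical quantities ($S^{(i)},\hW^{(i)},W^{(i)}_\alpha$), the $\xi$-derivative injects a $\jmath\nu$ that redistributes real and imaginary parts between the two Hessian rows, and the off-diagonal entry $h_{12}$ mixes an $\Imag$ from the $\xi$-factor with a real first-order term, so careful accounting of conjugation is essential to land on the exact form stated. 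Once this algebra is organized (e.g.\ by splitting the product rule into symmetric and antisymmetric pieces before taking real parts), the identities~\eqref{calenormfun_grad}--\eqref{calenormfun_Hessian} follow directly.
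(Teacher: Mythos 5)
Your proposal is correct and follows essentially the same route as the paper: the paper's (sketched) proof likewise identifies $\psi_j=\lambda_{\max}\left((W^{(0)}_{\alpha})^*W^{(0)}_{\alpha}\right)$ and invokes the classical first- and second-order derivative formulas for simple eigenvalues of parameter-dependent Hermitian matrices (Lancaster/Mengi), combined with the resolvent derivatives of the Cayley factor, deferring the bookkeeping to \cite[Section~5]{Kue16}. Your explicit computations of $\partial_\nu W^{(0)}_\alpha=-2\hW^{(1)}$, $\partial_\xi W^{(0)}_\alpha=2\jmath\nu S^{(2)}$, and the second derivatives match the stated formulas, so you have simply filled in the details the paper omits.
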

\begin{proof}
The results are obtained by building the partial derivatives of 
 $\hat\cC(A,\alpha=\nu+\jmath \xi)$ and of $\psi_j(\nu,~\xi)=\sigma^2_{\max}\left(
\cC(A,\nu+\jmath\xi)W\right)=\lambda_{\max}\left(\Psi^*\Psi\right)=\lambda_{\max}\left((W^{(0)}_{\alpha})^*W^{(0)}_{\alpha}\right)$ using results on derivatives
of eigenvalues of parameter dependent
matrices, e.g.,~\cite{Lan64,Men13}.
For detailed proof the reader is referred to~\cite[Section~5]{Kue16}, where also the formulas adapted to~\eqref{gcale} are given.
\end{proof}
 \subsection{Approximating the objective functions}\label{ssec:objapprox}
 The main issue arising when solving the optimization problems~\eqref{NLS_C},~\eqref{normmin} is that each evaluation of the objective functions $\Psi_j$,
$\psi_j$  at
a value
$\alpha$ requires one linear solve
with $A+\alpha I$. Moreover, the derivative formula reveal that evaluating derivatives
requires, depending on the order of the derivatives and on the used optimization problem, at least one
additional linear solve.
Thus, each evaluation within a derivate-based optimization method will be more expensive than a single LR-ADI iteration step, 
making the numerical solution of \eqref{NLS_C},~\eqref{normmin} very costly, regardless of the employed optimization algorithm, and, consequently, the shift
generation would be prohibitively expensive. 

As main contribution of this paper, this section proposes strategies to work with 
approximations of the objective functions $\tilde\Psi_j\approx \Psi_j$, $\tilde\psi_j\approx \psi_j$ which can be much cheaper evaluated
and whose construction is numerically efficient as well. 
The resulting shifts are generated by using $\tilde\psi_j$ in~\eqref{normmin} and $\tilde\Psi_j$ in~\eqref{NLS_C}. 
Our main approach is based on a projection framework using a low-dimensional subspace $\cQ\subset\Cn$, dim$(\cQ)=\ell\ll n$.
Let the columns of $Q_{\ell}\in\C^{n\times \ell}$ be an orthonormal basis of $\cQ$. We employ the usual Galerkin approach to obtain an approximation
\begin{align*}
 \Psi_j(\alpha,\overline{\alpha})&=\cC(A,\alpha)W_j\approx Q_{\ell}\cC(H_{\ell},\alpha)\tW_{\ell,j}=:\tilde\Psi_j(\alpha,\overline{\alpha}),\\
H_{\ell}&:=Q_{\ell}^*AQ_{\ell}\in\C^{\ell\times\ell},\quad
\tW_{\ell,j}:=Q_{\ell}^*W_{j}\in\C^{\ell\times s}.
\end{align*}
Because of the orthogonality of $Q_{\ell}$ it suffices to use the projected objective functions $\tilde\psi_j:=\|\cC(H_{\ell},\alpha)\tW_{\ell,j}\|_2$ and
$\hat\Psi_j(\alpha,\overline{\alpha}):=\cC(H_{\ell},\alpha)\tW_{\ell,j}$.
Evaluations of the functions and their derivatives is cheaper because the small dimension of $H_{\ell}$ allows easier to solve systems with
$H_{\ell}+\alpha I_{\ell}$.

In the following we discuss some choices for the projection subspace $\cQ$. Our emphasis is that
quantities already generated by the LR-ADI iteration are used as much as possible. Since~\eqref{NLS_C},~\eqref{normmin} have to be solved in each iteration 
step of Algorithm~\ref{alg:glradi} using a different residual factor $W_{j}$ each time, we also discuss the reuse of approximation data from step $j$ to $j+1$. 
\subsubsection{Using subspaces spanned by the low-rank factor}\label{sssec:approx_lrf}
In~\cite{Kue16} it is suggested to augment the Ritz value based shifts (Section~\ref{sssec:projshifts}) by the optimization problem~\eqref{normmin} using
$\tilde\psi_j$, 
i.e., after step $j$, the space $\cQ=\cZ(h)$ spanned by the last $1\leq h\leq j$ block columns of the already generated low-rank solution factor $Z_j=[V_1,\ldots,V_j]$ is
selected as in~\eqref{spaceZh}.
The reduced objective function $\tilde\psi_j$ is then built by $H_{\ell}$ and $\tW_{\ell,j}$.
The restriction $H_{j}$ of $A$ can be build without additional multiplications with $A$
because of~\eqref{lradi_lrf-CASE}. Let $R_j\in\C^{hs\times hs}$ so that $Q_j=[V_{j-h+1},\ldots,V_j]R_j$ has orthonormal columns. Then
\begin{align}\label{restrHjZj}
 H_{j}:=Q_j^*AQ_j=Q_j^*W_jG_{j,h}^*R_j-R_j^{-1}S_{j,h}^*R_j, 
\end{align}
where $G_{j,h}$, $S_{j,h}$ indicate the last $h$ block rows (and columns) of $G_{j}$, $S_{j}$ from~\eqref{lradi_lrf-CASE}.
Even though this space selection is rather intuitive, it led to impressive results often outperforming existing shift selecting
strategies~in~\cite{Kue16}. The obtained rate of the residual norm reduction in the LR-ADI iteration was very close to the case
when the the exact objection function was used in~\eqref{normmin}, indicating a sufficiently good approximation of $\psi_j$ at low generation costs. Note that,
the concept of approximating an expensive to evaluate objective function by projections onto already built up subspaces can also be found for other problems,
e.g., in the context of model order reduction~\cite{morCasPL17}.
\subsubsection{Krylov and extended Krylov subspace based approximations}\label{sssec:approx_Krylov}
Consider the block Krylov subspace of order $p$ as projection space:
\begin{align*}
 \cQ=\cK_p(A,W_j):=\myspan{W_j,AW_j,\ldots,A^{p-1}W_j}.
 \end{align*}
This is a common strategy for approximating the product of a parameter independent, large-scale matrix function times a block vector $f(A)W_j$, see,
e.g.,~\cite{Saa92,Kni92,FroS08,FroLS17}. 
For our parameter dependent matrix function $\cC(A,\alpha)$, this choice can be motivated by considering the boundary of the stability region, where
$\cC(A,0)W_j=W_j$ which is the first basis block of $\cK_p(A,W_j)$. On the other hand, at $\alpha=0$ we have
for the derivatives, e.g.,
\begin{align*}
  \frac{\partial \Psi_j(\alpha,\overline{\alpha})}{\alpha}&=-A^{-1}W_{j}
\end{align*}
and, moreover, $\grad^2 \psi_j$ at $\alpha=0$ involves expressions with $A^{-2}W_j$. In order to get, at least near the origin, a good approximation of
$\Psi_j$, $\psi_j$ and
their derivatives, this motivates to also incorporate information from a low-order inverse Krylov subspace $\cK_m(A^{-1},A^{-1}W_j)$ to the projection space
$\cQ$.
Hence, we consider the extended Krylov subspace
\begin{align*}
 \cQ=\cE_{p,m}(A,W_j)&:=\cK_p(A,W_j)\cup\cK_m(A^{-1},A^{-1}W_j)\\
 &=\myspan{W_j,AW_j,\ldots,A^{p-1}W_j,A^{-1}W_j,\ldots,A^{-m}W_j}
\end{align*}
as projection subspace.
Constructing the basis matrix $Q_{p,m}$ and the restrictions $H$, $\tW_{j}$ can
be done efficiently by the extended Arnoldi process~\cite{Sim07}, requiring essentially only matrix vector products and linear solves with $A$. 
However, for approximating $\Psi_j$, $\psi_j$, the right hand side $W_j$ changes throughout the ADI iteration, which would necessitate to construct a new
orthonormal basis
associated to $\cE_{p,m}(A,W_j)$ in each LR-ADI iteration step. 
As an auxiliary contribution, the next theorem shows that this is not needed for $j>1$ and shows how the subspaces $\cE_{p,m}(A,W_j)$ evolve from an initial 
subspace $\cE_{p,m}(A,B)=\cE_{p,m}(A,W_0)$. Note that because of the arising block matrices $q_i\in\Cns$,
$i=1,\ldots,\ell$, expressions $\myspan{q_1,\ldots, q_{\ell}}$ and
$\range{[q_1,\ldots,q_{\ell}]}$ in the theorem are to be understood in the block-wise sense following the framework defined 
in~\cite{FroLS17}. In particular,  
$\myspan{q_1,\ldots, q_{\ell}}=\lbrace\sum\limits_{i=1}^{\ell}q_i\Xi_i,~\Xi_i\in\C^{s\times s}\rbrace$ ,
similarly for $\range{\cdot}$. 
 \begin{theorem}\label{thm:krylovresvec}
 For $j>1$ and $0\leq p,m\leq n$ (with at least one of the orders $p,m$ nonzero) it holds
 \begin{align*}
  \cE_{p,m}(A,W_j)\subseteq\cE_{p,m}(A,B)\cup\range{Z_j}.
 \end{align*}
\end{theorem}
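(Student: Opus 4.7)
The plan is to exploit the Sylvester equations satisfied by $Z_j$ (Corollary~\ref{cor:lradi_lrf-CASE}) together with the factorization $W_j = B + Z_j G_j$ from Theorem~\ref{thm:lradiRes}, so that each basis block of $\cE_{p,m}(A,W_j)$ is expressed as a block-linear combination of vectors in $\cE_{p,m}(A,B)$ and columns of $Z_j$.

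First I would treat the polynomial (positive-power) part. Since $W_j = B + Z_j G_j$, we already have $W_j \in \cE_{p,m}(A,B) + \range{Z_j}$. For $k \geq 1$, I need to control $A^k W_j = A^k B + A^k Z_j\, G_j$; the first term lies in $\cK_p(A,B)$ for $k \le p-1$, so the task reduces to showing
\begin{equation*}
A^k Z_j \;\subseteq\; \range{Z_j} \;+\; \cK_k(A,B)
\end{equation*}
in the block-span sense. This follows by induction on $k$ using \eqref{lradi_lrf-CASE1}: the base case $k=1$ gives $A Z_j = Z_j S_j + B G_j^{\!*}$, and the inductive step multiplies the previous identity by $A$ and reapplies \eqref{lradi_lrf-CASE1} on the $A\,Z_j$ that appears, picking up only one additional block $A^k B$ on the right.

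Next I would handle the inverse-Krylov part analogously, using that $S_j$ is upper triangular with diagonal $\alpha_1,\ldots,\alpha_j \in \C_-$ and is therefore invertible. Rewriting \eqref{lradi_lrf-CASE1} as $A^{-1} Z_j = Z_j S_j^{-1} - A^{-1} B\, G_j^{\!*} S_j^{-1}$ shows that $A^{-1} Z_j \subseteq \range{Z_j} + \range{A^{-1}B}$, and by the same inductive scheme one obtains $A^{-k} Z_j \subseteq \range{Z_j} + \cK_k(A^{-1}, A^{-1}B)$ for $1 \le k \le m$. Combining with $A^{-k} W_j = A^{-k} B + A^{-k} Z_j G_j$ yields $A^{-k} W_j \in \cK_m(A^{-1},A^{-1}B) + \range{Z_j}$, and assembling the polynomial and rational contributions gives the claimed inclusion.

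The main subtlety will be bookkeeping the block-coefficient multiplications: all combinations in the induction occur with coefficient matrices acting from the right (powers of $S_j$, $S_j^{-1}$, $G_j^{\!*}$, etc.), which is exactly what is needed to stay inside the block-span of the stated generators following the convention of~\cite{FroLS17} recalled just before the theorem. Once the induction is written out carefully with this convention in mind, the inclusion is immediate; no additional structural properties of $A$ beyond invertibility (guaranteed by $\Lambda(A)\subset\C_-$) are required.
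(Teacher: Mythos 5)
Your proposal is correct and follows essentially the same route as the paper's proof: both rest on the decomposition $W_j=B+Z_jG_j$ from Theorem~\ref{thm:lradiRes} and an induction on powers of $A$ (resp.\ $A^{-1}$) driven by the Sylvester relation $AZ_j=Z_jS_j+BG_j^*$ from~\eqref{lradi_lrf-CASE1}, with only the cosmetic difference that you run the induction on $A^kZ_j$ and then add $A^kB$, whereas the paper inducts directly on $A^{k}W_j$. Your explicit remark that $S_j$ is invertible because its diagonal entries $\alpha_i$ lie in $\C_-$ is a useful detail that the paper leaves implicit.
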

\begin{proof}
For simplicity and clarity, we restrict the proof to the case $p>0,~m=0$. The more general situation can be elaborated similarly.
Let $\cK_p(A,B)=\range{K_p(A,B)}$, where 
$K_p(A,B):=[B,AB,\ldots,A^{p-1}B]\in\R^{n\times ps}$ is the associated block Krylov 
matrix. Likewise, $K_p(A,W_j)$ is the Krylov matrix w.r.t. $\cK_p(A,W_j)$.

We show $\myspan{A^{p-1}W_j}\subset\cK_p(A,B)\cup\range{Z_j}$
via induction. For $p=1$, it holds 
$A^0W_j=W_j=K_1(A,B)I_s+Z_jS_j^0G_j=B+Z_jG_j$ because of~\eqref{lradi_W}. Let the claim be true for all powers up to $p-2$,
i.e.,
it holds
\begin{align*}
 A^{p-2}W_j=K_{p-1}(A,B)M_{p-1}+Z_jN_{p-2}
\end{align*}
for some matrices $M_{p-1}\in\R^{s(p-1)\times s}$, $N_{p-2}\in\R^{js\times s}$ of rank $s$.
By using~\eqref{lradi_W} and~\eqref{lradi_lrf-CASE1}, we obtain for 
the induction step from matrix power $p-2$ to $p-1$
\begin{align*}
 A^{p-1}W_j&=A(A^{p-2}W_j)=A(K_{p-1}(A,B)M_{p-1}+Z_jN_{p-2})\\
  &=AK_{p-1}(A,B)M_{p-1}+BG_j^*N_{p-1}+Z_{j}S_jN_{p-2}\\
 &=[B,AK_{p-1}(A,B)]\begin{bmatrix}
               G_j^*N_{p-2}\\
               M_{p-1}
              \end{bmatrix}+Z_jS_jN_{p-2}.
\end{align*}
It is easy to see that $N_{p-2}=S_j^{p-2}G_j$
which establishes for $p>1$
\begin{align}\label{KrylovvecW}
 A^{p-1}W_j=K_p(A,B)M_k
+Z_j(S_j)^{p-1}G_j,\quad 
M_p:=\begin{bmatrix}
                 G_j^*S_j^{p-2}G_j\\
                 M_{p-1}
\end{bmatrix},~M_1:=I_s,
\end{align}
proving the assertion. For $m>0$ we have $A^{-1}Z_j=Z_jS_j^{-1}-A^{-1}BG_j^*S_j^{-1}$ by~\eqref{lradi_lrf-CASE1} which leads immediately to
$A^{-1}W_j=A^{-1}B(I_s-G_j^*S_j^{-1}G_j)+Z_jS_j^{-1}$ and, consequently,  $\cK_m(A^{-1},A^{-1}W_j)\subseteq\cK_m(A^{-1},A^{-1}B)\cup\range{Z_j}$ can be shown as
for the standard Krylov subspace. The unification yields the claim for $\cE_{p,m}$. 
\end{proof}

The consequence of Theorem~\ref{thm:krylovresvec} is that for every iteration 
step $j>1$, a basis for the subspace $\cE_{p,m}(A,W_j)$ can be constructed from the initial basis for
$\cE_{p,m}(A,B)$ and the low-rank factor $Z_j$.
By concatenating the block columns $W_j,AW_j,\ldots A_{p-1}W_j$ from~\eqref{KrylovvecW} we obtain
\begin{align}\label{KrylovmatW}
  K_p(A,W_j)&=K_p(A,B)T_p^{\cK}+Z_jK_p(S_j,G_j),\\\nonumber
 T_p^{\cK}&:=\smb I_s&T_2&\cdots&T_p\\
   &\ddots&\ddots&\vdots\\
   &&\ddots&T_2\\
   &&&I_s\sme
 \in\C^{sp\times sp}\quad\text{with}\quad \begin{matrix}
               T_i=G_j^*S^{i-2}_jG_j\in\C^{s\times s},\\
               1<i\leq p,
\end{matrix}
\end{align}
and for $m>0$ a straightforward generalized expression can be found.
Of course, from a numerical point of view it is not wise to work with the explicit (extended) 
Krylov matrices or the matrix $T_p^{\cK}$. 
Instead, we propose to
use
\begin{align}\label{Krylov:buildbasis}
 Q_{p,m}(A,W_j)=\texttt{orth}[Q_{p,m}(A,B),\omega_j],\quad \omega_j:=Z_jQ_{p,m}(S_j,G_j)
\end{align}
as projection space, where $\texttt{orth}$ refers to any stable orthogonalization routine. There, $Q_{p,m}(A,W_j)$, $Q_{p,m}(A,B)$, and $Q_{p,m}(S_j,G_j)$
are
orthonormal basis matrices for the extended Krylov spaces $\cE_{p,m}(A,W_j)$, $\cE_{p,m}(A,B)$, and $\cE_{p,m}(S_j,G_j)$, respectively.
The basis matrix $Q_{p,m}(A,B)\in\R^{n\times (p+m)s}$ can be constructed by an extended Arnoldi process,
which is only required once before the actual ADI iteration. Constructing the basis matrix $Q_{p,m}(S_j,G_j)\in\C^{js\times (p+m)s}$ only requires
working with $j\ll n$ dimensional data.
More details on the numerical implementation are given later in Section~\ref{sssec:implementation}.
\begin{remark}
 \begin{enumerate}
 \item The result for $m=0$ indicates a basic
framework for acquiring a basis of $\cK_p(A,W_j)$ from $\cK_p(A,B)$ without new matrix vector products involving $A$ and, thus, it could be useful for
iteratively solving the shifted linear system in LR-ADI by Krylov subspace methods. Since this is beyond the scope of
this study, we leave exploiting Theorem~\ref{thm:krylovresvec} for iterative linear solves for future work. 
 \item The motivation for using $\cE_{p,m}$ was to improve the approximation of $\Psi_j$, $\psi_j$ near the origin. However, in practice 
 the origin will be excluded in the actual optimization. One can use shifted spaces defined by $A-\phi I$, $\phi>0$, e.g., if one can expect that the local
minima have $\Real{\alpha}<-\phi<0$. This only changes the inverse Krylov subspace $\cK_m((A-\phi I)^{-1},(A-\phi I)^{-1}B)$ since the standard Krylov
subspaces are shift-invariant. 
  \item A intuitive extension would be the use of rational Krylov subspaces 
\begin{align*}
 \cK^{\text{rat}}_r(A,W_j,\boldsymbol{\beta})=\myspan{(A+\beta_1I)^{-1}W_j,\ldots,\prod\limits_{i=1}^r(A+\beta_iI)^{-1}W_j}
\end{align*}
with poles $\boldsymbol{\beta}=\lbrace
\beta_1,\ldots,\beta_r\rbrace$. The motivation for this choice is to approximate  $\Psi_j,~\psi_j$ at
values $\beta_i$, $1\geq i\geq r$ that may lie in the interior of the optimization region. However, this requires knowledge of adequate shifts $\beta_i$ such
that the relevant behavior of $\Psi_j,~\psi_j$ is captured.
Exactly this makes
the rational Krylov approximation problematic, since it is currently not known where in $\C_-$ the local minima of $\Psi_j,~\psi_j$ are located and where
suitable $\beta_i$
should be placed. Therefore, we will restrict in the remainder to the standard and extended Krylov subspace approaches.
The basis construction for $\cK^{\text{rat}}_r(A,W_j,\boldsymbol{\beta})$ can be build similarly from an a priori generated basis of
$\cK^{\text{rat}}_r(A,B,\boldsymbol{\beta})$ obtained by a rational (block) Arnoldi process~\cite{Ruh94c,Gue13}.
If $\beta_i=\alpha_i$, $1\leq i\leq r\leq j$, it is well known that $\cK^{\text{rat}}_r(A,B,\boldsymbol{\beta})\subseteq\range{Z_j}$
\cite{LiW02,FlaG12,PanW13a,morWol15}. However, even in this case $\myspan{(A+\alpha_iI)^{-1}W_j}\varsubsetneq\range{Z_j}$ such that the construction mentioned
in Section~\ref{sssec:approx_lrf} is not a true rational Krylov approximation.
 \end{enumerate}
\end{remark}
\subsubsection{Implementation}\label{sssec:implementation}
Before the approaches for solving the optimization problems are investigated, the numerical implementation of the proposed strategy for approximating the
objective function $\Psi_j$, $\psi_j$ within the LR-ADI iteration is discussed. 
Assume that before the LR-ADI iteration is started, a block extended Arnoldi process~\cite{Sim07} with orders $p,m$ is applied to $A,B$ which provides 
\begin{align*}
Q_B^*Q_B=I,\quad\range{Q_B}=\cE\cK_{p,m}(A,B),\quad P_B&:=AQ_B,  
\end{align*}
$Q_{B}(1:s,:)\eta=B,~\eta\in\R^{s\times s}$, and $H_{B}=Q_B^*AQ_B=Q_B^*P_B\in\R^{(p+m)s\times (p+m)s}$, i.e., the restriction of $A$ w.r.t.
$\cE\cK_{p,m}(A,B)$.
For later use, $Q_B,~P_B$, and $H_B$ are stored. If no shift parameters for the first LR-ADI steps are provided, a residual-norm minimizing shift $\alpha_1$ is
computed by solving
a reduced optimization problem~\eqref{normmin} defined by $H_B$ and $\tW_0:=Q_B^*B=[\eta^*,0,\ldots,0]^*\in\R^{(p+m)s\times s}$.  
Suppose $j$ steps of the LR-ADI iterations have been carried out and we look for a shift $\alpha_{j+1}$ for the next step by using a reduced objective function
constructed from the approximation space $\cK_p(A,W_j)$.
Motivated by Theorem~\ref{thm:krylovresvec}, the augmented basis matrix~\eqref{KrylovmatW} w.r.t. the augmented space $\cE\cK_{p,m}(A,B)\cup \myspan{\omega_j}$,
where $\omega_j:=Z_jQ_{S_j}\in\C^{n\times (p+m)s}$ and $Q_{S_j}\in\C^{js\times (p+m)s}$ is the orthogonal basis matrix spanning $\cE\cK_{p,m}(S_j,G_j)$.
Executing the extended Arnoldi process with $S_j,~G_j$ 
is extraordinarily cheap because it involves only quantities of dimension $j$ due to the Kronecker structure of $S_j, G_j$
(cf.~\eqref{lradi_W},\eqref{lradi_lrf-CASE1}). The matrix $\omega_j$ gives the relevant part of $\range{Z_j}$ needed for 
$\cE\cK_{p,m}(A,W_j)$. If $j\leq p+m$, $Z_j$ has less than or exactly $(p+m)s$ columns, and $w_j:=Z_j$ is used as simplification. 
To orthogonally extend the basis of $\cE\cK_{p,m}(A,B)$ by $\myspan{\omega_j}$, we may employ any stable orthogonalization routine, e.g., an iterative
block Gram Schmidt process. Consider for illustration one sweep of block Gram Schmidt
\begin{align*}
 h_j:=Q_B^*w_j\in\C^{(p+m)s\times (p+m)s},\quad \hat\omega_j:=\omega_j-Q_Bh_j,\quad Q_{Z_j}:=\hat\omega_j\hat h_j,
\end{align*}
where $\hat h_j$ orthonormalizes the columns of $\hat\omega_j$ and is obtained by a thin QR decomposition of $\hat\omega_j$. Hence,
$Q_j:=[Q_B,Q_{Z_j}]\in\C^{n\times 2(p+m)s}$ is the sought orthogonal basis matrix, $\tW_j:=Q_j^*W_j$, and
\begin{subequations}\label{updHj}
\begin{align}
 H_j:=Q_j^*AQ_j
 =\begin{bmatrix}\smb
 H_B\\
 (Q_{Z_j})^*P_B
 \sme&
 Q_j^*P_{Z_j}
 \end{bmatrix}\in\C^{2ps\times 2ps},\quad P_{Z_j}:=AQ_{Z_j}.
\end{align}
The additional $p+m$ matrix vector products with $A$ can be avoided by constructing $P_{Z_j}$ in conjunction with the Gram-Schmidt orthogonalization of
$\omega_j$ against $Q_B$ and using
\eqref{lradi_lrf-CASE1}:
\begin{align}
 \hP&:=A\omega_j=AZ_jQ_{S_j}=BG_j^*Q_{S_j}+Z_jS_jQ_{S_j},\\
 P_{Z_j}&=AQ_{Z_j}=A\hat\omega_j\hat h_j=(A\omega_j-AQ_Bh_j)\hat h_j=(\hP-P_Bh_j)\hat h_j.
\end{align}
\end{subequations}

Unless $A+A^*\prec 0$, it can happen that the restriction $H_j$ has unstable eigenvalues which would be problematic for the usage of the compressed objective functions.
As a basic counter measure, we replace $H_j$ by its Schur form $H_j\leftarrow Q_{j,H}^*H_jQ_{j,H}$ and simply negate any arising unstable eigenvalues that appear on the diagonal of $H_{j}$. 
Transforming the compressed objective function into the Schur basis $Q_{j,H}$ also simplifies the evaluation of function and derivatives due to the
(quasi)triangular structure of the Schur form.

The generation of the shift $\alpha_{j+1}$ for the next LR-ADI step $j+1$ is summarized in
Algorithm~\ref{alg:resmin} including both projection subspace choices from Sections~\ref{sssec:approx_lrf} and~\ref{sssec:approx_Krylov}. 
\begin{algorithm}[t]
\SetEndCharOfAlgoLine{}
\SetKwInOut{Input}{Input}\SetKwInOut{Output}{Output}
  \caption{Construction and solution of reduced minimization problems.}
  \label{alg:resmin}
    \Input{LR-ADI iteration index $j$, low-rank solution factor $Z_j$, residual factor $W_j$, previously used shifts $\lbrace\alpha_1,\ldots,\alpha_j\rbrace$,
orders
$p,m$ for extended Krylov
subspace, matrices $Q_B,~H_B=Q_B^*(AQ_B)$ of initial space $\cE\cK_{p,m}(A,B)$, number $h>0$ of previous block columns of $Z_j$ if $p=m=0$.}
    \Output{Next shift $\alpha_{j+1}$ for LR-ADI iteration}
  \eIf{$j>1$}
  {%
  \eIf{$p>0$ and $m>0$}{%
  \eIf{$j\leq p+m$}{%
  Set $Q_{S_j}=1$.\;}
  {%
  Generate orthonormal basis $Q_{S_j}\in\C^{sj\times (p+m)s}$ for $\cE_{p,m}(S_j,G_j)$ with $S_j,~G_j$ from~\eqref{lradi_W},~\eqref{lradi_lrf-CASE}.\;
}
$Q_j=\texttt{orth}[Q_B,Z_jQ_{S_j}]$, . 
}
{
$Q_j=\texttt{orth}[Z_j(:,(j-\min(j,h))s+1:js)]$.\;
} 
$H_j=Q_j^*(AQ_j)$ (using~\eqref{restrHjZj} or~\eqref{updHj}), $\tW_{j}:=Q_j^*W_{j}$.
  }  
  {$H_j=H_B$, $\tW_{j}=Q_B^*W_0(=Q_B^*B)$.\;}
  Compute Schur form $H_j\leftarrow Q_{j,H}^*H_jQ_{j,H}$ (negate unstable eigenvalues on demand), $\tW_{j}\leftarrow Q_{j,H}^*\tW_j$\;
  Find local minimizer $\alpha_{j+1}=\nu+\jmath \xi\in\C_-$ by solving compressed optimization problems~\eqref{NLS_C},~\eqref{normmin} defined by
  $H_j,~\tW_j$.\;
\end{algorithm}
\paragraph{Dealing with Generalized Lyapunov Equations}
In practice often generalized Lyapunov equations~\eqref{gcale} arise with an additional, invertible matrix $M\in\Rnn$. The LR-ADI iteration for~\eqref{gcale} is
given by
\begin{align}\label{lradi_M}
	V_j=(A+\alpha_j M)^{-1}W_{j-1},\quad W_j=W_{j-1}+\gamma_j^2MV_j,\quad W_0:=B,
\end{align}
(see, e.g.~\cite{morBenKS13,Kue16}) leading to generalizations of the objective functions
\begin{align*}
 \Psi^M_j(\alpha,\overline{\alpha})=(A-\overline{\alpha}M)(A+\alpha M)^{-1}W_{j},\quad
\psi_j^M(\alpha,\overline{\alpha})&=\|\Psi^M_j(\alpha,\overline{\alpha})\|^2.
\end{align*}
Approximating the generalized objective functions by using subspaces of $\range{Z_j}$ as in Section~\ref{sssec:approx_lrf} leads to
$\tilde\Psi^M_j\approx\Psi^M_j$ defined by
\begin{align*}
N_j:=Q_j^*MQ_j,\quad H_{j}:=Q_j^*AQ_j=Q_j^*W_jG_{j,h}^*R_j-N_jR_j^{-1}S_{j,h}^*R_j,\quad \tW_{j}:=Q_{j}^*W_{j},
\end{align*}
where $Q_j$,~$R_j$ come from a thin QR-factorization of the $h$ newest block columns of $Z_j$.

\smallskip
For the (extended) Krylov
subspace approximations proposed in Section~\ref{sssec:approx_Krylov}, minor complications arise because these subspaces are only defined by a single
$n\times n$
matrix. This can be dealt with by defining, e.g.,  $A_M:=M^{-1}A$, $W_{M,j}:=M^{-1}W_j$ and using
\begin{align*}
 \Psi^M_j(\alpha,\overline{\alpha})&=M\mathring{\Psi}_j(\alpha,\overline{\alpha}),\quad
\mathring{\Psi}_j(\alpha,\overline{\alpha})=\mathring{\Psi}_j:=(A_M-\overline{\alpha}I)(A_M+\alpha I)^{-1}W_{M,j},\\
 \psi_j^M(\alpha,\overline{\alpha})&=\|M\mathring{\Psi}_j\|^2=\lambda_{\max}(\mathring{\Psi}_j^*M^*M\mathring{\Psi}_j).
\end{align*}
The objective function approximation framework presented before can still be used except that $Q_{B}$ now spans $\cE\cK_{m,p}(A_M,~B_M)$ for $B_M:=M^{-1}B$.
For~\eqref{lradi_M}  the relations~\eqref{lradi_lrf-CASE} hold for $A_M,~B_M,~W_{M,j}$ such that we can orthogonally augment $Q_B$ by $Q_{Z_j}$ exactly as
before to $Q_j=[Q_B,~Q_{Z_j}]$ and use the approximations
\begin{align*}
 \Psi^M_j\approx F_{M,j}\mathring{\hat\Psi}_j(\alpha,\overline{\alpha}),\quad
F_{M,j}:=MQ_{j},\quad \mathring{\hat\Psi}:=\cC(H_{j},\alpha)Q_{j}^*W_{M,j}.
\end{align*}
The matrix $F_{M,j}\in\R^{n\times 2(m+p)s}$ is independent on the optimization variables and can therefore be easily integrated into the compressed
optimization problems via, e.g., a thin QR factorization $F_{M,j}=Q_{M,j}R_{M,j}$.
\subsection{Solving the Optimization Problem}\label{ssec:solveopti}
Having constructed the reduced objective function $\tilde\Psi_j$, $\tilde\psi_j$ by the approaches discussed before,
we plan to find a local minimizer with a derivative-based numerical optimization routine. Here, we omit most details on the 
optimization routines as more information can be found in the given citations and references therein as well as standard literature on numerical
optimization~\cite{NocW99}.

The constraints in~\eqref{NLS_C}--\eqref{normmin} can for practical purposes be given by
\begin{align}\label{GCALE_opti_bounds}
\nu_-\leq\nu\leq\nu_+,\quad 0\leq\xi\leq\xi_+,\quad -\infty<\nu_-<\nu_+<0,\quad \xi\in\R_+,
\end{align}
where the imaginary part was restricted to the nonnegative real numbers because we exclusively consider CALEs defined by 
real matrices and the generated set of shift parameters is supposed to be closed under complex conjugation.
We set $\nu_\pm,~\xi_+$ by approximate spectral data of $A$ using the extremal eigenvalues of $H_j$. 
\smallskip
Often, optimization algorithms require an initial guess to start, and the first value returned by any of the projection based shift selection approaches
(Section~\ref{sssec:projshifts}) can be employed
for this. We use the shift obtained by the Residual-Hamiltonian approach as initial guess since this led to the most promising results.\\
\smallskip
For solving the polyanalytic, nonlinear least square problems~\eqref{NLS_C} with constraints~\eqref{GCALE_opti_bounds} we use the routine
\texttt{nlsb\_gndl} from the \tensorlab{} software
package~\cite{tensorlab30}. The routine
\texttt{nlsb\_gndl} is based on a projected Gauss-Newton type method. 
In principle, the functionality of \tensorlab{} would also allow to solve the complex minimization problem~\eqref{normminC}.
Only if~\eqref{NLS_C} is restricted to real variables
$\alpha\in\R_-$, the routine~\texttt{lsqnonlin} of the \matlab~optimization toolbox\texttrademark{} is an alternative.
For solving the real valued constrained minimization problem~\eqref{normmin} in real variables, a large variety of software solutions is available.
The 
\matlab{} optimization toolbox\texttrademark{} provides with the \texttt{fmincon} routine a general purpose optimizer, which comes with different choices
for the internal optimization algorithms, e.g., interior-point~\cite{WalMNetal06} and trust-region-reflective algorithms~\cite{ColL96}, which both
allow to specify hard-coded Hessians via the explicit formulas~\eqref{calenormfun_Hessian}.\\
However, when $s>1$ the assumption in
Theorem~\ref{thm:normfun_cale_grad} that the parameter
dependent matrix $(W^{(0)}_{\alpha})^*W^{(0)}_{\alpha}$ has $s$ distinct eigenvalues 
$\theta_i(\nu,\xi)$, $1\leq i\leq s$ can in practice be violated. For instance, it can happen that $\theta_1(\nu,\xi)$ and
$\theta_2(\nu,\xi)$ coalesce at certain points $\nu,\xi$. Consequently, at these points the derivatives of $\psi_j$ do not exist, see, e.g.,
\cite{Ove92,Lew97,LewO13,Men13,MenYK14,CurMo17}. Practical observations show that especially minima of $\psi_j$ are often attained at those points.
The same problems are also present for the reduced optimization problem with $\tilde\psi_j$.
Obviously, if $s=1$, such issues are
not present which motivated the simple approach in~\cite{Kue16} to prevent these instances by simply transforming the eigenvalue optimization to a
scalar
optimization problem.
This can be achieved by multiplying the compressed residual factors $\tW_{j}$ with an appropriate tangential vector $\hW_{j}=\tW_{j}t$, where an
obvious choice for $t$ is the left singular vector corresponding to
the largest singular value of $\tW_{j}$. The associated modified objective function is then 
\begin{align}\label{normmin_t}
 \hW_{j}=\tW_{j}t,\quad \hat\psi_j:=\|\hW_{j}-2\nu(H_j+(\nu+\jmath \xi)I)^{-1}\hW_{j}\|^2.
\end{align}
Although this transformation is an additional approximation step regarding the original function $\psi_j$,
numerical experiments in, e.g.,~\cite{Kue16} do not indicate a substantial deterioration of the quality of the obtained shift
parameters and, moreover, it simplifies the evaluations of the functions and its derivatives a bit further.\\
Here we also handle the minimization of $\tilde\psi_j$ without the modification~\eqref{normmin_t}.
Methods based on the BFGS framework are capable of solving non-smooth optimization problems~\cite{LewO13,CurMo17} provided a careful implementation
is used that allows to deal with points where the objective function does not have derivatives. The \granso{} package~\cite{CurMo17} provides
\matlab{} implementations of these BFGS type methods and will also be tested in the numerical experiments for~\eqref{normmin} without the
modification~\eqref{normmin_t}.
\section{Multistep Extensions}\label{sec:multi}
Until now a single shift $\alpha_{j+1}$ was generated in each iteration step for reducing the residual norm from the current to the immediate next step.
Theoretically, this may be generalized towards the generation of shifts to be used in multiple, say $g>1$, future LR-ADI steps to reduce $\|W_{j+g}\|^2$
the most starting from $\|W_j\|^2$.
The NLS formulation for $g\geq1$
takes the form
\begin{align*}
 \lbrace\alpha_{j+1},\ldots,\alpha_{j+g}\rbrace&=\myargmin\limits_{\boldsymbol{\alpha}\in\C^{g}_-}\|\Psi_{j,j+g}(\boldsymbol{\alpha},\overline{
\boldsymbol { \alpha }} )\|^2,\quad \Psi_{j,j+g}(\boldsymbol{\alpha},\overline{\boldsymbol{
\alpha }} ):=\left(\prod\limits_{i=1}^{g}\cC(A,\boldsymbol{\alpha}(i))\right)W_j.
\end{align*}
Since we always assumed that if $\alpha_i\in\C_-$ also its complex conjugate is used (Remark~\ref{realadi}),
the above approach could yield parameters for up to $2g$ future LR-ADI steps. 
Obviously, solve this multistep optimization problem is harder than the single-step one. 
For instance, since the order in which the shifts are applied is not important we have $\Psi_{j,j+g}(\boldsymbol{\alpha},\overline{\boldsymbol{\alpha
}})=\Psi_{j,j+g}(\Pi_g\boldsymbol{\alpha},\Pi_g\overline{\boldsymbol{
\alpha }} )$ for any permutation $\Pi_r\in\R^{g\times g}$, implying that several local minima always exist. Moreover, the larger $g$, the harder it
will be to approximate
$\Psi_{j,j+g}$ by the data available at step $j$ such that potentially better shifts might be obtained from the single shift approach carried out $g$ times
in
succession. A similar generalization of~\eqref{normmin} can be found in~\cite{Kue16}, where no substantial improvements over the single shift approach are
reported.\\
\smallskip
A particular interesting special situation is when the $g>1$ future shift parameters are restricted to be equal, $\alpha_{j+i}=\alpha_{j+1}$, $1\leq i\leq g$.
We point out that similar multi-step approaches were investigated for Smith-type methods in, e.g.,~\cite{Pen00,morAntSG01,GugSA03,morSun08}.
Although this restriction will likely slow down the convergence compared to different shifts in each step, it can be practical for reducing the
computation time for solving the large-scale linear systems in the LR-ADI iteration. In particular, when sparse direct solvers are employed, a sparse LU
factorization
$LU=A+\alpha_{j+1}I$ 
is reused in the required forward and backward solves for the linear systems in the next $g$ iteration steps: $V_{j+i}=U^{-1}L^{-1}W_{j+i-1}$, $i\leq 1\leq g$.
This can be substantially cheaper compared to solving $g$ different shifted linear systems, depending on the value $g$ and the cost for solving a single
shifted
linear system. Hence, smaller overall computation times of LR-ADI can be achieved at the price of a slower Lyapunov residual reduction and larger generated low-rank factors. 
Obviously, one could simply use the shift obtained by the single step residual norm minimization framework $g$ times. 
We hope to obtain a better LR-ADI performance by incorporating the prior knowledge
that 
$\alpha_{j+1}$ is supposed to be used in $g\geq 1$ iteration steps in residual norm minimization approach. The associated multi-shift NLS formulation is
\begin{align*}
\alpha_{j+1}&=\myargmin\limits_{\alpha\in\C_-}\|\Psi_{j,j+g}(\alpha,\overline{\alpha} )\|^2,\quad \Psi_{j,j+g}(\alpha,\overline{\alpha}
):=\cC(A,\alpha)^g W_j.
\end{align*}
Using the product rule, the Jacobian and conjugate Jacobian of $\Psi_{j,j+r}$ are given by
\begin{align*}
 \frac{\partial \Psi_{j,j+g}(\alpha,\overline{\alpha})}{\alpha}&=-g\cC(A,\alpha)^g(A+\alpha
I)^{-1}W_{j},\\
 \frac{\partial
\Psi_{j,j+g}(\alpha,\overline{\alpha})}{\overline{\alpha}}&=-g\cC(A,\alpha)^{g-1}(A+\alpha
I)^{-1}W_{j}.
\end{align*}
By the same reasoning as in Section~\ref{sssec:approx_Krylov}, these formula indicate that for approximating the objective function and its derivatives, the
orders $p,m$
for the approximation subspace $\cE\cK_{p,m}(A,B)$ should be at least $g$, but in the numerical experiment smaller orders worked sufficiently well. 
The function minimization approach is extended in the same way by defining the scalar function $\psi_{j,j+g}(\nu,\xi):=\|\cC(A,\nu+\jmath \xi)^g W_j\|^2$.  
Theorem~\ref{thm:normfun_cale_grad} for the derivatives of $\psi_{j,j+g}$ can easily be
reformulated by using
\begin{align}
 W_{\alpha}^{(r)}:=g\cC(A,\alpha)^{g-1}W_j=g\left(I-2\nu L(\nu,\xi)^{-1}\right)^{g-1}W_j
\end{align}
instead of $W_{\alpha}^{(0)}$. 
\section{Numerical Experiments}\label{sec:numex}
In this section we execute several numerical examples to evaluate different aspects of the residual norm minimizing shift selection techniques.
All experiments were done in \matlab~2016a using a \intel\coretwo~i7-7500U CPU @ 2.7GHz with 16 GB RAM.
We wish to obtain an approximate solution such that the scaled Lyapunov residual norm satisfies
\begin{align*}
 \fR:=\|\cR^{\text{true}}\|/\|B\|^2\leq \varepsilon,\quad 0<\varepsilon\ll 1.
\end{align*}
Table~\ref{tab:ex} summarizes the used test examples. 
\begin{table}[t]
  \centering
  \caption{Overview of examples}\footnotesize
  \begin{tabularx}{\textwidth}{l|l|l|X|l}
    Example & $n$ & $s$&details&$\varepsilon$\\
    \hline
    \textit{cd2d}&40000&1, 5&finite difference discretization of 2d operator $\cL(u)=\Delta  u-100x\partderiv{u}{x}{}-1000y\partderiv{u}{y}{}$ on
$[0,1]^2$, homogeneous Dirichlet b.c.&1e-8\\
    \textit{cd3d}&27000&10&finite difference discretization of 3d operator $\cL(u)=\Delta
u-100x\partderiv{u}{x}{}-1000y\partderiv{u}{y}{}-10z\partderiv{u}{z}{}$ on
$[0,1]^3$, homogeneous Dirichlet b.c.&1e-8\\
    \textit{lung}&109460&10&model of 
  temperature and water vapor transport in the human lung from suitesparse collection~\cite{DavH11}&1e-8\\
    \textit{chip}&20082&5&finite element model of chip cooling process~\cite{morMooRGetal04}, $M=M^*\neq I$&1e-10\\
  \end{tabularx}\label{tab:ex}
\end{table}
The right hand side factors $B$ for all examples except \texttt{chip} are generated randomly with uniformly distributed entries, where the random number generator is
initialized by \texttt{randn{('state', 0)}} before each test. The maximal allowed number of LR-ADI steps is restricted to $150$.
In all experiments, we also emphasize the numerical costs for generating shift parameters by giving shift generation times
$t_{\text{shift}}$ next to the total run times $t_{\text{total}}$ of the LR-ADI iteration.
Before we compare the proposed residual minimizing shifts against other existing approaches, some tests with respect to certain aspects of this shift
selection framework are conducted.
\subsection{Approximation of the objective function}\label{ssec:approxopt}
At first, we evaluate different approximation approaches from Section~\ref{ssec:objapprox} for the objective functions, i.e., we test the influence of different
choices for the projection subspace to the overall performance of
the LR-ADI iteration. This experiment is carried out on the \texttt{cd2d} example with a single vector in $B$ and $\|B\|=1$. The NLS formulation~\eqref{NLS_C}
is employed and dealt with by the \tensorlab{} routine \texttt{nlsb\_gndl}.
As approximation subspaces the last $h=8$ columns of $Z_j$ from Section~\ref{sssec:approx_lrf} (denoted by $\cZ(8)$) and the extended Krylov approximations from
Section~\ref{sssec:approx_Krylov} with different orders $p,m$ are used. By the proposed subspace construction, the dimension of the basis is in all cases at
most 8. 
Moreover, the experiment is carried out in the single-step as well as multi-step fashion with $g=5$ from Section~\ref{sec:multi}. Figure~\ref{fig:objapprox}
and Table~\ref{tab:objapprox} summarize the obtained results.
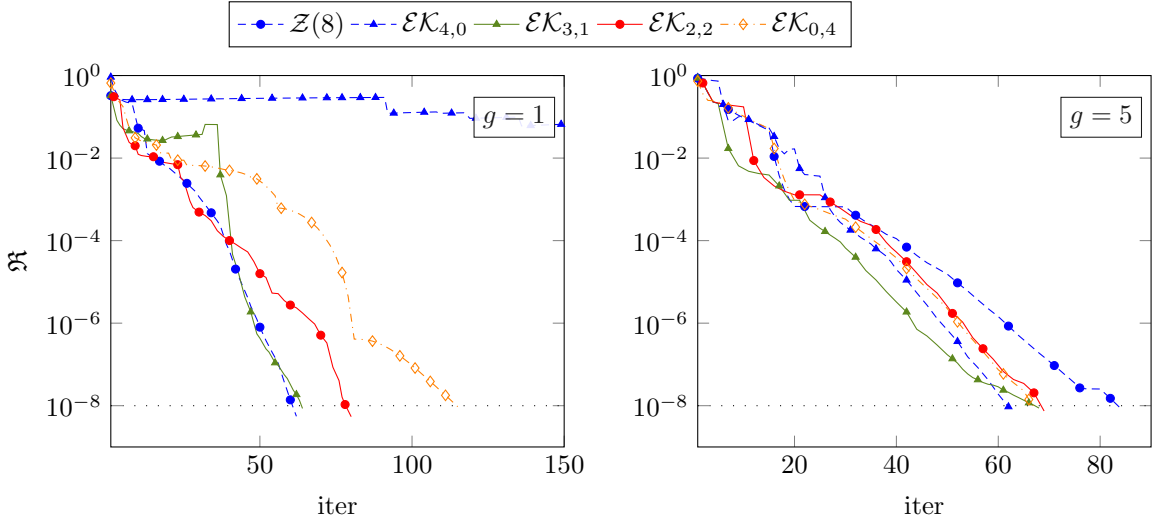
\begin{figure}[t]
\centering
%
%
%
%
\begin{tikzpicture}
\pgfplotsset{every axis title/.append style={at={(0.98,0.9)},anchor={north
        east},draw=black,fill=white,opacity=0.9}}
\begin{axis}[%
width=0.46\linewidth,
cycle list name=resN,
name=objapprox1,
title={$g=1$},
xmin=1,
xmax=150,
xlabel={iter},
ylabel={$\fR$},
ymode=log,
ymin=1e-9,
ymax=1,
yminorticks=false,
legend style={draw=black,fill=white,legend pos=north west,below right,
at={(0.26,1.2)},
legend columns=5},
legend entries={$\cZ(8)$,$\cE\cK_{4,0}$,$\cE\cK_{3,1}$,$\cE\cK_{2,2}$,$\cE\cK_{0,4}$},
]
\addplot 
  table[row sep=crcr]{1	0.325797645851315\\
2	0.35719740301062\\
4	0.26312563619185\\
6	0.227145895770247\\
8	0.212386586159755\\
10	0.0530783546586726\\
12	0.0466104915382267\\
13	0.012881410303664\\
14	0.0113591930962562\\
16	0.00883394815775651\\
17	0.0083938641630044\\
19	0.00638674944842941\\
20	0.00589379992259278\\
22	0.00439365724985575\\
24	0.00330188012514257\\
26	0.00244663216960515\\
27	0.00207035555282307\\
29	0.00150426430892609\\
30	0.00114530878859804\\
32	0.000740187826471065\\
34	0.000472472326250924\\
36	0.000299391694072705\\
37	0.000229129872448444\\
39	8.10759940467735e-05\\
40	5.63439725199519e-05\\
42	2.04131742902912e-05\\
44	9.06923448369857e-06\\
46	4.15360073994272e-06\\
47	2.57186689141577e-06\\
49	1.24037025813697e-06\\
50	7.99955486905606e-07\\
52	3.79024370509674e-07\\
54	1.98101527112938e-07\\
56	1.07384806700063e-07\\
58	3.76570092169729e-08\\
60	1.38720791643474e-08\\
62	5.53580257300501e-09\\
};

\addplot 
  table[row sep=crcr]{1	0.915927647885331\\
2	0.486575368156221\\
4	0.266341458877113\\
6	0.260536566797488\\
7	0.260078864682815\\
8	0.260071843911554\\
9	0.260187741433945\\
10	0.260307566356739\\
11	0.260419787157374\\
12	0.260588414686459\\
13	0.260819887437835\\
14	0.261097222550133\\
15	0.261402150051417\\
16	0.261735472363477\\
17	0.2622576506022\\
18	0.262695551739826\\
19	0.263159394293825\\
21	0.264270699864975\\
22	0.264740541491193\\
23	0.265216074694185\\
25	0.26633459398545\\
27	0.267452192539108\\
29	0.26862616595565\\
30	0.26921545043597\\
32	0.270279851143558\\
34	0.271232437343597\\
36	0.272276372877356\\
38	0.273603595650528\\
40	0.274801979050012\\
42	0.276011562598115\\
44	0.277205127838769\\
46	0.278289995132797\\
48	0.279333994247134\\
50	0.28028858530148\\
52	0.281331234386516\\
54	0.282320117331288\\
56	0.283186452844049\\
58	0.283986297125937\\
60	0.284750628512283\\
62	0.285657825010582\\
64	0.28644656947139\\
66	0.287216289322209\\
68	0.288149661593523\\
70	0.288892419299394\\
71	0.289257213163566\\
72	0.289592496555845\\
73	0.289900764513226\\
74	0.290185175321802\\
75	0.290430305470861\\
76	0.290664694309279\\
77	0.29092145843787\\
79	0.291358298000358\\
80	0.291635164018823\\
81	0.29190379562215\\
82	0.292185728679966\\
83	0.292503070673631\\
84	0.292834508670378\\
85	0.293173034641319\\
86	0.293516897381091\\
87	0.293811917875289\\
88	0.294123046727812\\
89	0.294434745003478\\
90	0.294789365495004\\
91	0.295123650768722\\
92	0.121548656930226\\
94	0.122815586179308\\
96	0.124309359164368\\
98	0.125765326743284\\
100	0.126961324540533\\
102	0.12806082040039\\
104	0.129248612313271\\
106	0.130344198325163\\
108	0.131329408852615\\
109	0.121402610380749\\
111	0.122479540309749\\
113	0.123349513518304\\
115	0.124300242424964\\
117	0.125236640032263\\
118	0.110568882205047\\
119	0.0917114842132782\\
121	0.0925496190083387\\
123	0.0934060123008527\\
125	0.0942427203084897\\
127	0.0950728506440382\\
129	0.095924665243979\\
131	0.096805586859222\\
133	0.0976133746074039\\
135	0.0983860603192847\\
136	0.0601148992601669\\
137	0.0605111950518719\\
139	0.0612849026084331\\
141	0.0621825302373328\\
143	0.0631168358069079\\
145	0.0639993858968391\\
147	0.0648863396537844\\
149	0.0657850445359844\\
151	0.0666715029236465\\
};

\addplot 
  table[row sep=crcr]{1	0.326557364870402\\
3	0.0824868115994812\\
4	0.0624283191065973\\
5	0.0517130644092859\\
6	0.0480262254459562\\
7	0.0457268452195517\\
8	0.0441283449750062\\
10	0.0395130103601436\\
11	0.033320812651134\\
12	0.0299782894132397\\
13	0.029041035803021\\
14	0.0282519990376977\\
15	0.0276168473421559\\
16	0.0271672518365742\\
17	0.0269051823563004\\
18	0.0267687029666322\\
19	0.027796956001014\\
20	0.0302181127500324\\
21	0.0324489895722493\\
22	0.0327508836925695\\
23	0.0332709815726624\\
24	0.0338002476160174\\
25	0.0343359438660601\\
26	0.0348603083929656\\
27	0.0353804462989247\\
29	0.0364157684138441\\
31	0.0374697458049235\\
32	0.0653878414673287\\
34	0.0651620475066039\\
36	0.0649310039612108\\
37	0.00392381586805325\\
39	0.0012309662017203\\
41	4.64022811733417e-05\\
43	1.61091661780258e-05\\
45	4.36104190182727e-06\\
47	1.8704632530514e-06\\
49	5.5017782190809e-07\\
50	4.31815767235093e-07\\
52	2.38008756797465e-07\\
53	1.98605041627517e-07\\
55	1.09343948025672e-07\\
56	9.51929550299331e-08\\
58	5.63034552578793e-08\\
59	4.56170332209093e-08\\
61	2.35518674728691e-08\\
62	1.85844795553737e-08\\
63	1.35274299804652e-08\\
64	8.58930435172956e-09\\
};

\addplot 
  table[row sep=crcr]{2	0.306941245391252\\
4	0.257453964165394\\
5	0.054226466951956\\
7	0.024220056762376\\
8	0.0211151098053372\\
9	0.0199088612344334\\
10	0.0123626302182092\\
11	0.0115445036983313\\
13	0.0109486157009318\\
14	0.0108414923130206\\
15	0.010783477921725\\
16	0.0120717667753482\\
17	0.00833702772092321\\
19	0.00771949783389059\\
21	0.00728618157943025\\
23	0.00691705053222728\\
24	0.00333308850564841\\
25	0.00279727955435767\\
27	0.000915118110702086\\
28	0.000631960629306893\\
30	0.00049088448586643\\
32	0.000405941083967083\\
34	0.000302926658465365\\
36	0.000173846018083815\\
38	0.000126148509412415\\
40	0.000100328545736496\\
42	8.22303574840944e-05\\
44	6.54648873087643e-05\\
46	5.3173494750871e-05\\
48	3.12320564938397e-05\\
50	1.58294279491491e-05\\
52	1.2802909029141e-05\\
54	5.35308305002106e-06\\
56	5.1878745484807e-06\\
58	3.52691497199732e-06\\
60	2.74730711906046e-06\\
62	2.35065402099566e-06\\
64	1.76677534595096e-06\\
66	1.28588445475481e-06\\
68	9.53285763926811e-07\\
70	5.11030969771047e-07\\
72	3.32305218739959e-07\\
74	8.54534123039738e-08\\
76	4.06114745227393e-08\\
77	1.62017942043148e-08\\
78	1.06752573679398e-08\\
80	5.41818164787189e-09\\
};

\addplot 
  table[row sep=crcr]{1	0.657540401119065\\
3	0.27159750280232\\
5	0.177560616346809\\
6	0.0963822961511231\\
7	0.0549830775278711\\
9	0.0309363889507113\\
11	0.0258659578373626\\
12	0.0244180851935545\\
13	0.0233267286598673\\
14	0.0223352453606388\\
16	0.0206651363338631\\
17	0.0134952511784072\\
18	0.0133067476797311\\
20	0.0131788925202597\\
21	0.00894541001531216\\
23	0.00889474257360319\\
25	0.00888170012669401\\
26	0.00672622395087415\\
28	0.00665807987781294\\
30	0.00657308893833495\\
32	0.00641639563406843\\
33	0.00630341626837192\\
35	0.00596854985388215\\
36	0.00576478604386016\\
38	0.00534083110309739\\
40	0.00502194300820828\\
42	0.00474603307044875\\
44	0.00435618967770437\\
45	0.00410300805212767\\
47	0.00346840794752314\\
49	0.00313685201956093\\
51	0.00244248647306365\\
53	0.00178386265564157\\
55	0.00121934118294726\\
56	0.000616652231766467\\
57	0.000610051438023733\\
59	0.000573957964134734\\
61	0.000512247952883891\\
63	0.000440336328468682\\
65	0.000358232746402819\\
67	0.000274578712684647\\
69	0.000199176706196018\\
71	0.000130320975031493\\
73	7.82846059000419e-05\\
75	4.02485227336721e-05\\
77	1.68238598275347e-05\\
79	4.46444963995009e-06\\
81	4.10998479906375e-07\\
83	4.05706316123593e-07\\
85	3.95574543883774e-07\\
87	3.67276585053853e-07\\
89	3.26111444755734e-07\\
91	2.78141033713501e-07\\
93	2.29329531069757e-07\\
95	1.82052080735533e-07\\
96	1.61511409923709e-07\\
97	1.42340625267132e-07\\
98	1.24988263716283e-07\\
99	1.09509648761577e-07\\
100	9.47772660460035e-08\\
101	8.1800875366629e-08\\
102	7.08645830740963e-08\\
103	6.0749677306038e-08\\
104	5.23746700060426e-08\\
105	4.49955303794135e-08\\
106	3.86837286323654e-08\\
107	3.31856235369143e-08\\
108	2.84957826003454e-08\\
109	2.42681105450084e-08\\
110	2.08067480621167e-08\\
111	1.78303379075822e-08\\
112	1.52995479368934e-08\\
113	1.30455667131113e-08\\
114	1.11153554932834e-08\\
115	9.49207020553361e-09\\
};
\addplot [color=black,loosely dotted,forget plot]
  table[row sep=crcr]{1	1e-08\\
150	1e-08\\
};
\end{axis}
%
%
%
%

\begin{axis}[%
width=0.46\linewidth,
title={$g=5$},
cycle list name=resN,
at={(objapprox1.north east)},
xshift=5em,
anchor=north west,
xmin=1,
xmax=90,
xlabel={iter},
ymode=log,
ymin=1e-9,
ymax=1,
yminorticks=false,
]
\addplot 
  table[row sep=crcr]{1	0.86176253300135\\
2	0.652345463392374\\
3	0.380813554905306\\
4	0.229375582433163\\
5	0.21175765728542\\
7	0.150044440374314\\
9	0.109085227412093\\
11	0.0817417180832647\\
13	0.0617068872307542\\
15	0.0468814107373084\\
16	0.010953535268617\\
17	0.00373787351514918\\
18	0.00172259093812899\\
19	0.00100044938593118\\
20	0.000672324901278712\\
22	0.000669498798910447\\
24	0.000666703751801564\\
26	0.000663939305406735\\
28	0.000661205012434195\\
30	0.000658500432726562\\
32	0.00041270761034191\\
34	0.000284326602330503\\
36	0.000204961169143537\\
38	0.000142770469555083\\
40	0.000111524736726745\\
42	6.95716708750131e-05\\
44	4.37023722838039e-05\\
46	2.91276988850394e-05\\
48	2.13123661846167e-05\\
50	1.53705825543659e-05\\
52	9.48017427723915e-06\\
54	6.01448819719283e-06\\
56	3.66635903260307e-06\\
58	2.25557146806726e-06\\
60	1.44396693536813e-06\\
62	8.49554996506465e-07\\
64	5.01327201056108e-07\\
66	3.08282313787355e-07\\
68	1.89073524592478e-07\\
70	1.1921338230103e-07\\
71	9.3955330872535e-08\\
72	7.38155257123864e-08\\
73	5.74871996680527e-08\\
74	4.52558188696564e-08\\
75	3.52777415020808e-08\\
76	2.69905710537005e-08\\
77	2.57825209882012e-08\\
78	2.55543986442514e-08\\
79	2.54562656683106e-08\\
80	2.53734019532093e-08\\
82	1.50811309352549e-08\\
84	8.66924081164034e-09\\
};

\addplot 
  table[row sep=crcr]{1	0.896358654125994\\
2	0.82929609805289\\
3	0.792589345191251\\
4	0.754373745676617\\
5	0.734376831818196\\
6	0.199307478440614\\
7	0.0797371164462575\\
8	0.081950831666362\\
9	0.0997889135282367\\
10	0.105292417102477\\
11	0.0859408053483851\\
12	0.07305086518394\\
13	0.0645914204922015\\
14	0.058095178330517\\
15	0.0535020632382451\\
16	0.0333408947298839\\
17	0.0168715852946052\\
18	0.012699080393461\\
19	0.0163872116359975\\
20	0.0166613551826202\\
21	0.00557712005685437\\
22	0.00398371247196828\\
23	0.0039144380938302\\
24	0.00374491576299909\\
25	0.00365358434044931\\
26	0.00109884063082811\\
27	0.000566024930346269\\
28	0.000374463944488945\\
29	0.000275929016110797\\
30	0.000220640052826084\\
31	0.000179712676702035\\
32	0.000148036942175315\\
33	0.000119392637537489\\
34	0.000100018199245126\\
35	8.28551417271557e-05\\
36	6.33096070680473e-05\\
37	5.22933090040073e-05\\
38	4.21035507771514e-05\\
39	3.0010318819079e-05\\
40	1.99256577869996e-05\\
42	1.10753683223752e-05\\
44	5.4340204492667e-06\\
46	2.74200851533807e-06\\
48	1.29038165096857e-06\\
50	6.84896007885315e-07\\
52	3.59227442415376e-07\\
54	1.52847536277949e-07\\
56	8.36303234236877e-08\\
58	4.23746274689785e-08\\
60	1.89734705407391e-08\\
62	9.42841456937383e-09\\
};

\addplot 
  table[row sep=crcr]{1	0.862859850461027\\
2	0.655974921847403\\
3	0.3853111366899\\
4	0.230426831360858\\
5	0.211279434985462\\
7	0.0170647223325451\\
9	0.00650759544072591\\
11	0.00478722263174331\\
13	0.00421730932966339\\
15	0.00388879739368598\\
17	0.0021158223092263\\
19	0.000952662406663747\\
21	0.000948936849907556\\
23	0.000311793313689714\\
25	0.000199604756446877\\
26	0.000166344414676627\\
27	0.000136483866183128\\
28	0.000113341723172697\\
29	9.27722719964624e-05\\
30	6.62214635563389e-05\\
32	3.9655703302319e-05\\
34	1.88200555515851e-05\\
36	1.11088225009406e-05\\
38	5.83882286067169e-06\\
40	3.2582200853199e-06\\
42	1.86230793365646e-06\\
44	7.12760539134803e-07\\
46	4.89039343576149e-07\\
48	2.94646408778753e-07\\
50	1.7148123062875e-07\\
51	1.37203515440569e-07\\
52	1.0757994387381e-07\\
53	7.67573842542945e-08\\
54	5.88535580161275e-08\\
55	4.79050491512111e-08\\
56	4.24743746510767e-08\\
57	3.6532842312499e-08\\
58	3.30842665537457e-08\\
59	3.15331777118375e-08\\
60	2.8659363678277e-08\\
61	2.37940780064818e-08\\
62	2.10948214667475e-08\\
63	1.80003047958288e-08\\
64	1.56981443831993e-08\\
65	1.35884912526595e-08\\
66	1.17229140378245e-08\\
67	1.02821184800625e-08\\
68	8.75652226507713e-09\\
};

\addplot 
  table[row sep=crcr]{2	0.658774167535689\\
4	0.231310386242174\\
6	0.201701561489085\\
8	0.186807515318631\\
10	0.174251233498597\\
12	0.00872187980566685\\
14	0.00337646858032485\\
16	0.00211035185285151\\
18	0.00157742511900287\\
20	0.00128947939204334\\
21	0.00128882184522737\\
22	0.00128816354286679\\
23	0.00128750449521528\\
24	0.0012868447125294\\
25	0.00128618420506831\\
27	0.000864317932319077\\
29	0.000606407256707294\\
31	0.00040822363567118\\
33	0.000291556670696853\\
35	0.00023633814766787\\
36	0.000186001833236904\\
37	0.000129905890144768\\
38	9.11043544313173e-05\\
39	6.779917188994e-05\\
40	5.27200015022225e-05\\
42	3.08549785137587e-05\\
44	1.63765641336076e-05\\
46	8.59778479814657e-06\\
48	5.36403825884287e-06\\
50	2.55268542325809e-06\\
51	1.72375357962876e-06\\
52	1.37335895939812e-06\\
53	1.06385009405904e-06\\
54	6.86514723714999e-07\\
55	4.43810634214505e-07\\
57	2.40596418705521e-07\\
59	1.34459137317656e-07\\
61	6.73432042764325e-08\\
63	4.31383503326261e-08\\
65	3.46195799901391e-08\\
67	2.04792917352866e-08\\
69	7.44595868171926e-09\\
};

\addplot 
  table[row sep=crcr]{1	0.731493656749155\\
2	0.361595094590853\\
3	0.250144221989765\\
4	0.244400368204636\\
5	0.232122681777026\\
7	0.165055827479075\\
9	0.119993761211739\\
11	0.089060564917239\\
13	0.0678246696993034\\
15	0.0514641684665076\\
16	0.0170678481757131\\
17	0.00669106466652593\\
18	0.00304027744851037\\
19	0.00159179533342932\\
20	0.000939904532292044\\
22	0.000756265736822034\\
24	0.000623071039936402\\
26	0.000501370083640964\\
28	0.0004013991041341\\
30	0.000332299710833644\\
32	0.000211838700364651\\
34	0.000138754853019733\\
36	9.10465311741939e-05\\
38	6.09308713134812e-05\\
40	3.82275936240455e-05\\
42	2.11195271407263e-05\\
44	1.18412143603543e-05\\
46	6.95848676383697e-06\\
48	4.04630996675769e-06\\
50	2.25419988703934e-06\\
52	1.0781954630515e-06\\
54	5.41251475804822e-07\\
56	2.84800385932055e-07\\
58	1.474138361516e-07\\
60	7.43046973189201e-08\\
61	5.89816056213114e-08\\
62	4.56744130995594e-08\\
63	3.59062955639879e-08\\
64	2.88635617275792e-08\\
65	2.29384616520653e-08\\
66	1.41633416057377e-08\\
67	9.47565377989525e-09\\
};
\addplot [color=black,loosely dotted,forget plot]
  table[row sep=crcr]{1	1e-08\\
150	1e-08\\
};
\end{axis}
\end{tikzpicture}%
  \caption{Residual norm history of LR-ADI iteration using different objective function approximations for \texttt{cd2d} example. Left: single step 
shift selection, right: multi step shift selection with $g=5$.}
  \label{fig:objapprox}
\end{figure}
\begin{table}[t]
  \centering
  \caption{Results with different projection subspaces for objective function approximation using the \texttt{cd2d} example. For single and multi step
approches ($g=5$), listed are the executed iteration numbers (iters), total and shift computation times ($t_{\text{total}}$, $t_{\text{shift}}$) in seconds, and
final residual norm $\fR$ (res).}
  \begin{tabularx}{\textwidth}{X|l|l|l|l||l|l|l|l}
    &\multicolumn{4}{c||}{single step ($g=1$)}&\multicolumn{4}{c}{multi step ($g=5$)}\\
  projection space&iters&$t_{\text{total}}$&$t_{\text{shift}}$&res&iters&$t_{\text{total}}$&$t_{\text{shift}}$&res\\
    \hline
$\cZ(8)$&62&19.0&3.1&5.5e-09&84&8.3&1.7&8.7e-09\\
$\cE\cK_{4,0}$&151&53.3&8.2&6.7e-02&62&6.9&1.3&9.4e-09\\
$\cE\cK_{3,1}$&64&22.4&4.8&8.6e-09&68&7.5&1.7&8.8e-09\\
$\cE\cK_{2,2}$&80&26.2&4.9&5.4e-09&69&7.5&1.7&7.4e-09\\
$\cE\cK_{0,4}$&115&38.3&7.8&9.5e-09&67&8.3&1.9&9.5e-09\\
  \end{tabularx}\label{tab:objapprox}
\end{table}
Apparently, for the single step optimization approach, using $\cZ(h)$ as approximation space seems to yield the best shifts
compared to the other subspace choices. The required iteration numbers and timings are the smallest among all tested settings. 
In particular, the pure Krylov ($m=0$) and inverse Krylov subspaces ($p=0$) lag behind the other choices.
The picture changes when considering the multistep optimization from Section~\ref{sec:multi} over $g=5$ steps, where the extended Krylov approximations of the
objective function yield better shifts, i.e., less iteration steps compared to using $\range{Z_j}$. Interestingly, in some cases the number of iteration steps
is
even lower compared to the single step optimization. Due to the reuse of LU factorizations over $g=5$ steps, and since the optimization problem has to be
solved less frequently  (only in every $5$th step or even less if the generated shift is complex), the savings in the computation times reported in
Table~\ref{tab:objapprox} are substantial. 
To conclude, while the standard objective function approximation using $\range{Z_j}$ seems to work satisfactory in most cases for the single step shift
selection, for $g>1$ better results might be obtained by the extended Krylov approximations proposed in this work.

\subsection{Choice of the optimization routine}\label{ssec:opt_solve}
\begin{table}[t]
  \centering
  \caption{Results with different optimization routines for the \texttt{cd3d} example.}
  \begin{tabularx}{\textwidth}{l|X|l|l|l|l}
  opt. problem&opt. routine&iters&$t_{\text{total}}$&$t_{\text{shift}}$&res\\
    \hline
NLS~\eqref{NLS_C}&\texttt{nlsb\_gndl}&52&98.3&2.4&2.9e-09\\
NLS~\eqref{normmin_t}&\texttt{nlsb\_gndl}&47&73.0&1.6&7.8e-09\\ 
fun.min.~\eqref{normmin}&\texttt{fmincon} with int. point method&48&70.8&2.5&5.6e-09\\
fun.min.~\eqref{normmin_t}&\texttt{fmincon} with int. point method&50&77.2&2.1&4.4e-09\\  
fun.min.~\eqref{normmin}&\texttt{fmincon} with thrust region reflective method&49&78.4&2.1&4.7e-09\\ 
fun.min.~\eqref{normmin_t}&\texttt{fmincon} with thrust region reflective method&51&76.9&1.7&2.0e-09\\  
fun.min.~\eqref{normmin}&\granso{}&49&89.2&14.5&7.3e-09\\ 
  \end{tabularx}\label{tab:optsolve}
\end{table}

Now we test different optimization problem formulations~\eqref{NLS_C},~\eqref{normmin} as well as different optimization routines for the
\texttt{cd3d} example having $s=10$ columns in $B$. As in the previous experiment, the \tensorlab{} routine \texttt{nlsb\_gndl} is used for the NLS problem,
but  for the function minimization problem~\eqref{normmin} we employ \granso{} and \texttt{fmincon}. In \texttt{fmincon} the interior-point and
trust-region-reflective methods are used as optimization routines. Since $s>1$, we also use the tangential approximation~\eqref{normmin_t} to avoid the
potential nonsmoothness of $\psi_j$ in~\eqref{normmin} and test this modification also within the NLS framework. The projection subspaces for the objective
function approximations are constructed from the last $h=4$ block columns of $Z_j$. Table~\ref{tab:optsolve} summarizes the results. Judging from the number of
required LR-ADI steps, the usage of different optimization routines appears to have less impact than working with different objective function approximations.
The additional tangential approximation~\eqref{normmin_t}
seems to slow down the LR-ADI iteration only marginally. The exception for this is when the NLS formulation~\eqref{NLS_C} and \texttt{nlsb\_gndl} is used, where
5 less LR-ADI steps, and consequently less computation time, are required. 
Using \granso{} resulted in comparatively high computational times for this shift generation.  The main computational bottleneck in this method are the arising
quadratic optimization problems.
Apparently, the non-smoothness of the function $\phi_j$ (in the sense of coalescing eigenvalues of $W(\alpha)^*W(\alpha)$) did hardly occur or appear to be
problematic for methods for smooth optimization problems, so that the application of non-smooth optimizers or using the tangential
approximation~\eqref{normmin_t} might not be necessary in most cases. 
Although not reported here, tests using \texttt{fmincon} without explicitly provided Hessians led to similar results regarding the
required number of steps of the LR-ADI iteration, but marginally longer shift generation times since the inherent optimization algorithms (interior-point or 
trust-region-reflective) required more steps. 
The performance of the optimization routines appeared to be also noticeably influenced by the choice of the initial guess. Using the heuristic instead of the residual-Hamiltonian selection for determining the initial guess led to higher shift generation times due to longer runs of the optimization routines.
Setting up the constraints~\eqref{GCALE_opti_bounds} for the optimization variables by using the computed Ritz values (eigenvalues of $H_j$) led in a few cases
to difficulties for the solution of the optimization problems. Especially the upper bounds for the imaginary parts of the shift parameters
appeared to be of strong influence. Further adjustments are necessary in this directions, also with respect to deciding in advance if the optimization problems
can be safely restricted to real variables. Currently, this is only done for problems with real spectra (e.g., $A=A^*$).
\subsection{Comparison with other shift selection routines and methods}\label{ssec:compar_shift}
Now the LR-ADI performance obtained with the approximate residual norm minimizing shifts is compared with the other shift selection strategies reviewed in
Section~\ref{ssec:shifts}. All employed shift generation and selection approaches are used and abbreviated as in Table~\ref{tab:shifts}.
\begin{table}[t]
  \centering
  \caption{Overview of employed shift selection strategies.}
  \begin{tabularx}{\textwidth}{c|l|X|l}
   type&abbreviation&description of strategy&info\\
   \hline
  \multirow{6}{*}{\rotatebox{90}
  {\mbox{precomputed}}}
  &heur($J,p,m$)&heuristic selection of $J\in\N$ shifts from Ritz values associated
to
$\cE\cK_{p,m}(A,B\mathbf{1}_s)$, cyclic usage&\cite{Pen99,Saa09}\\
&Wachs$(\epsilon,~p,~m)$ &Wachspress selection using Ritz values associated to $\cE\cK_{p,m}(A,B\mathbf{1}_s)$ and tolerance $0<\epsilon\ll1$, cyclic
usage&\cite{Saa09,Wac13,Sab07}\\
  \hline
\multirow{10}{*}{\rotatebox{90}
  {\mbox{adaptive}}}
  &$\cZ(h)$+heur&projection based shifts using newest $h$ block columns of $Z_j$ and selection via heuristic&\cite{BenKS14b,Kue16},
Section~\ref{sssec:projshifts}\\
&$\cZ(h)$+conv&projection based shifts as above, but  convex hull based selection&\cite{DruS11}, Section~\ref{sssec:convshifts}\\
&$\cZ(h)$+Hres&projection based shifts as above, but residual Hamiltonian based selection&\cite{BenBKetal18}, Section~\ref{sssec:Hres}\\
&resmin+$\cQ$+OS&residual norm minimizing shifts with $\cQ$ as approximation space and OS as optimization routine&\cite{BenKS14b,Kue16},
Section~\ref{sec:resminshift}\\
  \end{tabularx}
\end{table}\label{tab:shifts}

We also run a few tests with the multishift approach with $g=5$.
For each example, we
also compare the LR-ADI to the rational Krylov subspace method~\cite{DruKS11} (RKSM) equipped with the convex-hull based shift selection~\cite{DruS11}. The reduced Lyapunov equation
in RKSM is solved in every 5th step.

Table~\ref{tab:compare_shift} summarizes the results and Figure~\ref{fig:compare_shift} shows the history of the scaled Lyapunov residual norms for some selection approaches and the \texttt{cd3d}, \texttt{chip} examples.
The proposed residual norm minimizing shift generation strategy based on reduced objective functions leads to the smallest number of required iteration steps compared to the other selection approaches. The obtained rate of residual norm reduction is very close to the one obtained by RKSM, but LR-ADI required in all tests less computation time.
Hence, taking both the iteration numbers as well as the computation times in account, with the right set of shift parameters the LR-ADI iteration is competitive to RKSM. Note that RKSM is theoretically expected to converge faster than LR-ADI~\cite{DruKS11}.
Among the Ritz value based shift selection techniques (Section~\ref{sssec:projshifts}) for LR-ADI, the Residual-Hamiltonian selection~(Section~\ref{sssec:Hres},
$\cZ(h)$+Hres) appears to perform best, leading to iteration numbers close to the ones obtained with the residual minimizing shifts. The precomputed shift
approaches (heur($J,p,m$), Wachs($\epsilon,p,m$)) could in several cases not compete with the dynamic shift generation approaches, which, again underlines the
superiority of an adaptive selection of shift parameters. The generation times $t_{\text{shift}}$ of the adaptive shifts were in all cases only a small fraction
of the total computation times $t_{\text{total}}$. Due to the need to solve (compressed) optimization problems, the generation times of the residual minimizing
shifts was in some cases slightly higher compared to the other approaches.  Although the multishift selection approach yielded higher iteration numbers, they
led to a substantial reduction in the computation times $t_{\text{total}}$ because 
of the reuse of LU factorizations over several iteration steps.

\begin{table}[t]
  \centering
  \caption{Comparison of different shift routines and comparison against RKSM}
  \begin{tabularx}{\textwidth}{l|l|X|l|l|l|l}
  ex.&method&shift selection strategy&iters&$t_{\text{total}}$&$t_{\text{shift}}$&res\\
    \hline
\multirow{7}{*}{\rotatebox{90}
  {\mbox{\texttt{cd2d}}}}&
  \multirow{6}{*}{
  {\mbox{LR-ADI}}}
&heur(20, 30, 20)&137 & 35.3 & 0.8 &9.0e-09\\ 
&&wachs(10$^{-8}$, 30, 20)&93 & 24.0 & 0.8 &7.2e-09\\ 
&&$\cZ(4)$+heur&74 & 20.7 & 0.7 &9.2e-09\\ 
&&$\cZ(4)$+conv&80 & 21.5 & 2.2 &6.1e-09\\ 
&&$\cZ(4)$+Hres&74 & 19.4 & 1.6 &2.8e-09\\ 
&&resmin+$\cZ(4)$+\texttt{fmincon}&60 & 19.8 & 4.0 &3.0e-09\\ 
&RKSM&convex hull&61 & 39.0 & 4.5 &4.3e-09\\
\hline
\multirow{8}{*}{\rotatebox{90}
  {\mbox{\texttt{cd3d}}}}&
  \multirow{7}{*}{
  {\mbox{LR-ADI}}}
&heur(20, 40, 30)&68 & 85.8 & 1.9 &1.7e-09\\ 
&&wachs(1e-08, 30, 20)&150 & 188.2 & 1.4 &5.2e-04\\ 
&&$\cZ(4)$+heur&71 & 89.1 & 0.3 &1.9e-09\\ 
&&$\cZ(4)$+conv&57 & 70.5 & 1.3 &2.3e-09\\ 
&&$\cZ(4)$+Hres&52 & 64.9 & 1.1 &9.7e-09\\ 
&&resmin+$\cZ(4)$+\texttt{fmincon}&50 & 65.7 & 1.9 &6.7e-09\\ 
&&resmin+$\cE\cK_{1,1}$+\texttt{nlsb\_gndl}, $g=5$&59 & 23.4 & 0.9 &6.2e-09\\ 
&RKSM&convex hull&61 & 115.8 & 7.7 &5.6e-11\\ \hline
\multirow{8}{*}{\rotatebox{90}
  {\mbox{\texttt{lung}}}}&
  \multirow{7}{*}{
  {\mbox{LR-ADI}}}
&heur(20, 30, 20)&150 & 60.5 & 4.0 &1.7e-08\\ 
&&wachs(1e-8, 30, 20)&150 & 56.8 & 4.2 &2.8e-02\\ 
&&$\cZ(2)$+heur&94 & 37.5 & 0.9 &4.6e-10\\ 
&&$\cZ(2)$+conv&80 & 36.2 & 5.7 &3.6e-09\\ 
&&$\cZ(2)$+Hres&71 & 31.7 & 4.1 &8.5e-09\\ 
&&resmin+$\cZ(2)$+\texttt{fmincon}&65 & 29.5 & 3.5 &8.6e-09\\ 
&&resmin+$\cZ(2)$+\granso{}, $g=5$&69 & 8.4 & 1.7 &9.5e-09\\ 
&RKSM&convex hull&67 & 126.9 & 7.4 &1.9e-09\\ 
\hline
\multirow{8}{*}{\rotatebox{90}
  {\mbox{\texttt{chip}}}}&
  \multirow{7}{*}{
  {\mbox{LR-ADI}}}
&heur(10, 20, 10)&33 & 24.5 & 1.2 &7.1e-11\\ 
&&wachs(1e-12, 20, 10)&34 & 24.4 & 1.2 &5.0e-13\\ 
&&$\cZ(4)$+heur&70 & 49.8 & 0.3 &5.6e-11\\ 
&&$\cZ(4)$+conv&55 & 39.1 & 0.3 &5.6e-11\\ 
&&$\cZ(4)$+Hres&43 & 30.5 & 0.2 &8.6e-12\\ 
&&resmin+$\cZ(4)$+\texttt{fmincon}&32 & 27.0 & 0.9 &2.2e-11\\ 
&&resmin+$\cE\cK_{1,1}$+\texttt{nlsb\_gndl}, $g=5$&39 & 9.5 & 0.1 &7.8e-11\\ 
&RKSM&convex hull&32 & 25.7 & 1.7 &1.5e-12\\ 
  \end{tabularx}\label{tab:compare_shift}
\end{table}

\begin{figure}[t]
\centering
%
%
%
 \begin{tikzpicture}
\pgfplotsset{every axis title/.append style={at={(0.98,0.9)},anchor={north
        east},draw=black,fill=white,opacity=0.9}}
\begin{axis}[%
width=0.46\linewidth,
cycle list name=resN,
name=shiftcd3d,
title={\texttt{cd3d}},
ylabel={$\fR$},
xmin=0,
xmax=70,
xlabel={iteration $j$},
ymode=log,
ymin=1e-9,
ymax=1,
yminorticks=false,
legend style={draw=black,fill=white,legend pos=north west,below right,
at={(0.26,1.25)},
legend columns=5},
legend entries={heur,Wachs,$\cZ(h)$+Hres,resmin,RKSM},
]
\addplot 
  table[row sep=crcr]{1	1.10763349510512\\
3	1.08964020816818\\
5	0.081849432388713\\
7	0.0690174879043089\\
9	0.0676772813398115\\
11	0.0640607813615749\\
13	0.0632018536429315\\
15	0.0288469065269739\\
17	0.0266566601492399\\
19	0.0257471021199775\\
21	0.018514989355172\\
22	0.00648937342137748\\
24	0.00625028773742805\\
26	0.00567214428781085\\
28	0.00482946833127034\\
30	0.00461408159348399\\
32	0.00220559563092914\\
34	0.00197290932380614\\
36	6.82057849092707e-05\\
38	5.80708112471444e-05\\
40	5.40449746810844e-05\\
42	2.95468785681399e-05\\
43	2.03708907938484e-05\\
45	2.04786170998184e-05\\
47	5.32319644283765e-06\\
49	3.43006719140633e-06\\
51	3.07877734654503e-06\\
53	8.70554567029507e-07\\
55	7.16091215277371e-07\\
57	1.30195668853776e-07\\
59	1.31334635690468e-07\\
61	1.31945015334142e-07\\
63	1.28691285838718e-07\\
64	1.04125766521465e-07\\
66	1.02899496131205e-07\\
68	1.71717508938228e-09\\
};
\addplot 
  table[row sep=crcr]{2	0.813659996989789\\
4	0.394137331058712\\
6	0.147784807412999\\
8	0.0589413540516167\\
10	0.0349391515102778\\
12	0.0278265502306889\\
14	0.0240584758203573\\
16	0.0209272351117901\\
18	0.0176825441010208\\
20	0.0147077582355864\\
22	0.0123972836211694\\
24	0.010598305659158\\
26	0.00918636112615203\\
28	0.0088186787491181\\
30	0.00846031903655964\\
32	0.00809873248103173\\
34	0.007724221822473\\
36	0.0073295517805626\\
38	0.00691016288484464\\
40	0.00646537576636295\\
42	0.00600035994719084\\
44	0.00552677616736953\\
46	0.00505989982410392\\
48	0.00461435853158073\\
50	0.0042021123774768\\
52	0.00383156325003356\\
54	0.0037281296531802\\
56	0.00362410132916731\\
58	0.00351574244129706\\
60	0.00339977362597609\\
62	0.00327346362054883\\
64	0.00313486567114576\\
66	0.00298319385711674\\
68	0.00281926029223738\\
70	0.00264578548393086\\
72	0.00246735677438781\\
74	0.00228994431493285\\
76	0.00212004321928436\\
78	0.00196353516150492\\
80	0.00191947262637887\\
82	0.00187499634620281\\
84	0.00182851547980379\\
86	0.00177860207017077\\
88	0.00172402018495424\\
90	0.00166380470560784\\
92	0.00159739494737717\\
94	0.00152481674990743\\
96	0.00144687904395535\\
98	0.00136530416868352\\
100	0.001282666158076\\
102	0.00120203456383674\\
104	0.00112637664421509\\
106	0.00110484661459164\\
108	0.00108299012486696\\
110	0.00106001056848999\\
112	0.00103517173674632\\
114	0.00100781095905488\\
116	0.0009773773232846\\
118	0.000943499926762195\\
120	0.000906086074529902\\
122	0.000865435720724116\\
124	0.000822334597558566\\
126	0.000778063695784835\\
128	0.000734263662156214\\
130	0.000692646939996275\\
132	0.000680735704265582\\
134	0.000668610005033321\\
136	0.000655827180027801\\
138	0.000641974329398494\\
140	0.000626676573918448\\
142	0.000609619436284374\\
144	0.000590588115756643\\
146	0.000569523140680192\\
148	0.000546583950372789\\
150	0.000522199289119858\\
};
\addplot 
  table[row sep=crcr]{1	0.247145889398508\\
2	0.25708545965943\\
4	0.0656750928157702\\
6	0.052216525810698\\
8	0.0259004352254581\\
10	0.0233560829874364\\
12	0.0122375017765433\\
14	0.0058546512526331\\
16	0.00257159128531652\\
18	0.000937193797230441\\
20	0.000317191201060798\\
22	0.000134435036864406\\
24	0.000105276617197449\\
26	7.51395375800295e-05\\
28	4.02635269444984e-05\\
30	3.12721305788309e-05\\
32	3.15241653564656e-05\\
34	2.64129113872426e-05\\
36	2.49014448817974e-05\\
38	2.17030844336789e-05\\
40	8.71670401986563e-06\\
42	7.19668670409962e-07\\
44	1.67719980481483e-07\\
46	1.17690243733129e-07\\
48	5.44210107306916e-08\\
50	2.13534803872209e-08\\
52	9.74729371182566e-09\\
};
\addplot 
  table[row sep=crcr]{2	0.18522533414479\\
4	0.0601052953255656\\
6	0.0386286840625092\\
8	0.033448303643434\\
10	0.0146292200003597\\
12	0.00660686797343362\\
14	0.0030349780072845\\
16	0.00132946661790597\\
18	0.000583112102947888\\
20	0.00043995891268036\\
22	0.000368832903012376\\
24	0.000231926931555289\\
26	0.000187003761219075\\
28	0.00015742424485434\\
30	3.38740883501082e-05\\
32	5.62495670501021e-06\\
34	3.33428087261721e-06\\
36	2.79163192715441e-06\\
38	2.46612947418132e-06\\
40	4.15881442414929e-07\\
42	2.12166428748527e-07\\
44	1.11744415146888e-07\\
46	1.67025436549983e-08\\
48	1.04773759876008e-08\\
50	6.72093053380624e-09\\
};
\addplot 
  table[row sep=crcr]{6	0.222538705256773\\
12	0.00742645396051783\\
16	0.00330772015451442\\
22	0.000556107246462002\\
26	0.000348315718164078\\
31	2.21559215978823e-05\\
37	1.2464288033321e-06\\
41	4.49254325191192e-07\\
47	1.05009529584043e-07\\
51	2.77107528580881e-08\\
57	1.58159062482866e-08\\
61	5.60465687414973e-11\\
};

\addplot [color=black,loosely dotted,forget plot]
  table[row sep=crcr]{1	1e-08\\
150	1e-08\\
};
\end{axis}
%
%
%
%
\begin{axis}[%
width=0.46\linewidth,
cycle list name=resN,
name=shiftchip,
at={(shiftcd3d.north east)},
xshift=5em,
anchor=north west,
title={\texttt{chip}},
xmin=1,
xmax=45,
xlabel={iteration $j$},
ymode=log,
ymin=1e-14,
ymax=1,
yminorticks=false,
]
\addplot 
  table[row sep=crcr]{1	0.284612266863784\\
2	0.222382991479843\\
3	0.201243790547839\\
5	0.0565163369768556\\
6	0.0149493502475803\\
7	0.0120335002712559\\
8	0.00382640496823069\\
9	0.00257712698714985\\
10	0.000532843188021515\\
11	0.000147910920503546\\
12	0.000145009280067795\\
13	0.00011309186179389\\
15	8.36744057347603e-05\\
16	1.04108938119339e-05\\
17	7.12813380711603e-06\\
18	1.98049249178223e-06\\
19	1.52302633665737e-06\\
20	5.45443099439169e-07\\
21	2.31149968807318e-07\\
22	2.29239292879377e-07\\
23	1.76849025612521e-07\\
25	1.48595771267943e-07\\
26	1.75187228337567e-08\\
27	9.92241732743692e-09\\
28	2.13079034604319e-09\\
29	1.85446763623223e-09\\
30	1.12750291473479e-09\\
31	5.49663281529446e-10\\
32	5.46464909832991e-10\\
33	4.228667251569e-10\\
35	3.69724921108442e-10\\
36	3.55333310185348e-11\\
};
\addplot 
  table[row sep=crcr]{1	0.952578066844361\\
2	0.902036828503098\\
3	0.836574042619954\\
4	0.749196336220306\\
5	0.635537559875929\\
6	0.497144685200167\\
7	0.35211747629013\\
8	0.230204556960944\\
9	0.146345193141837\\
10	0.0946828280916068\\
11	0.0657736616641543\\
12	0.0506013065417372\\
13	0.0377129070377625\\
14	0.0229010766942828\\
15	0.00915403237814463\\
16	0.00167641064609381\\
17	7.03736029038205e-05\\
18	3.49364664452489e-07\\
19	3.38304437560078e-07\\
20	3.26717602612665e-07\\
21	3.12084168218323e-07\\
22	2.92806001708731e-07\\
23	2.67690975137119e-07\\
24	2.36690361540173e-07\\
25	2.03381808780123e-07\\
26	1.74364075236447e-07\\
27	1.53342837797346e-07\\
28	1.38218467311349e-07\\
29	1.24245188191837e-07\\
30	1.06596996398061e-07\\
31	8.10096853657161e-08\\
32	4.9049653909073e-08\\
33	1.95987514194982e-08\\
34	3.59231053080063e-09\\
35	1.49738376055859e-10\\
36	3.37526621656905e-13\\
};
\addplot 
  table[row sep=crcr]{1	0.607222037748559\\
2	0.252841990654916\\
3	0.129562891379669\\
4	0.092200053454374\\
5	0.0643918157473054\\
6	0.0625461090099315\\
7	0.062296684426368\\
8	0.0622234596142567\\
9	0.062202562762406\\
10	0.0594664710355709\\
11	0.0594616746078548\\
12	0.0500771297275784\\
13	0.0127169271708864\\
14	0.0127164015052577\\
15	0.0127162147734533\\
16	0.0127161440051881\\
17	0.000546681656860237\\
18	0.000546383435606058\\
19	0.000543628640935961\\
20	0.000523096508304541\\
21	0.000486457963740542\\
22	0.000316754598492995\\
23	1.55891630375289e-05\\
24	4.70978888200015e-07\\
25	1.59353684909663e-07\\
26	1.24635471210793e-07\\
27	1.05833530737354e-07\\
28	9.70834047417366e-08\\
29	9.23615435550479e-08\\
30	9.09009349314487e-08\\
31	9.06981826788146e-08\\
32	9.0309596653967e-08\\
33	9.02546143643041e-08\\
35	9.02467446113469e-08\\
36	9.02156769249708e-08\\
37	9.0092190275792e-08\\
38	8.76806468777381e-08\\
39	5.69713700664613e-08\\
40	6.44050690513838e-09\\
41	1.40730625515547e-10\\
42	1.29964149387121e-10\\
43	8.58952255174321e-12\\
};
\addplot 
  table[row sep=crcr]{2	0.284260578140807\\
4	0.0706635757508755\\
6	0.0197832408850999\\
8	0.00925069528117225\\
10	0.00727109371810785\\
12	0.00134159357484208\\
14	4.46142642895117e-05\\
16	1.28603430677636e-05\\
18	3.74148757582666e-06\\
20	1.55578195739859e-07\\
22	1.11878193042905e-07\\
24	2.77616421381693e-08\\
26	3.26563982995167e-09\\
28	8.08238652242424e-10\\
30	2.40258509254415e-10\\
32	2.15841236972625e-11\\
};
\addplot 
  table[row sep=crcr]{5	0.113652964497825\\
10	0.00056821737190042\\
15	7.93809609131433e-06\\
20	4.11636947033642e-08\\
26	2.85508781286251e-10\\
32	1.50460359469529e-12\\
};

\addplot [color=black,loosely dotted,forget plot]
  table[row sep=crcr]{1	1e-10\\
150	1e-10\\
};

\end{axis}
\end{tikzpicture}%
  \caption{Residual norm history of LR-ADI iteration and RKSM using different shift selection strategies.}
  \label{fig:compare_shift}
\end{figure}
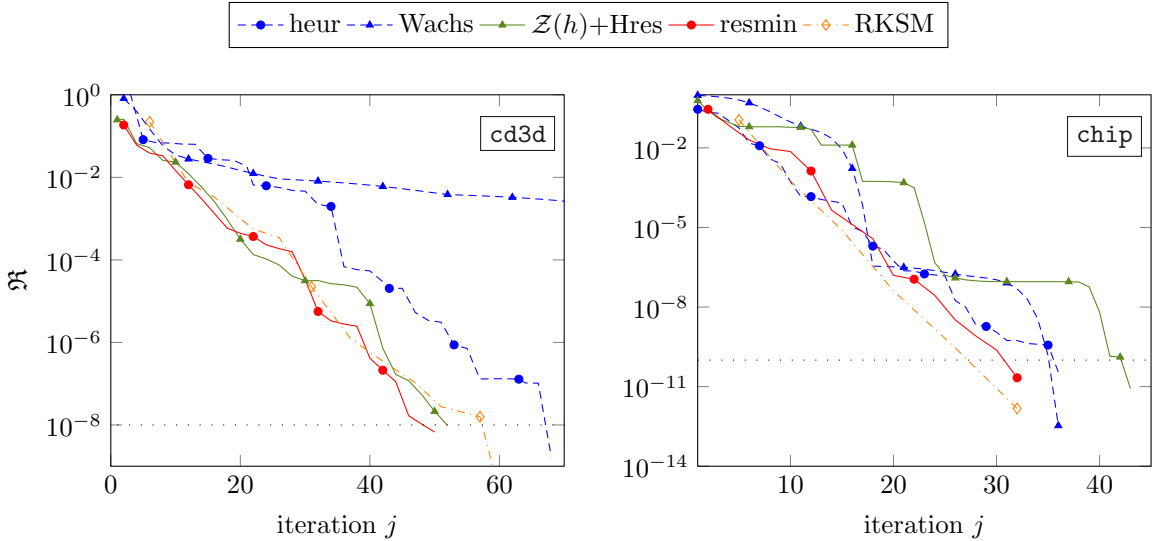
\section{Summary}\label{sec:summary}
This article discussed dynamically generated shift parameters for the LR-ADI iteration for large Lyapunov equations.
The selection of shifts was based on a residual norm minimization principle, which could be formulated as a nonlinear least squares or function minimization
problem. Since the involved objective functions are too expensive to evaluate, a framework using approximated objective functions was developed. These
approximations were built using projections onto low-dimensional subspaces, whose efficient construction from the data generated by the LR-ADI iteration was
presented. The numerical experiments showed that the proposed shift generation approach resulted in the fastest convergence of LR-ADI, bringing it very close to
the rational Krylov subspace method in terms of the iteration numbers. At the expense of higher iteration numbers, a substantial computation time reduction
could be achieved by a multishift selection approach.
Obvious future research direction might include similar shift selection strategies in LR-ADI type methods for other matrix equations, e.g., algebraic Sylvester
and Riccati equations, where first investigations can be found in~\cite{BenKS14b,Kue16,BenBKetal18} and should be further refined in future research. Deriving a similar multishift selection for RKSM is also an open topic.
Improving the solution of the occurring optimization problems by, e.g., providing better constraints or initial guesses, would further increase the performance
of the residual norm minimizing shift selection.

\section*{Acknowledgments}
  Thanks go to Tim Mitchell for helpful discussions regarding numerical optimization as well as for assistance with his
software \granso{}. Additional thanks go to Davide Palitta for proofreading.


\end{document}